\newtheorem{theorem}{Theorem}
\newtheorem{lemma}{Lemma}
\newtheorem{proposition}{Proposition}
\newcommand{\tto}{\twoheadrightarrow}
\begin{document}

\title[Tilting modules and exceptional sequences]
{Tilting modules and exceptional sequences\\ for leaf quotients of type \textit{A} zig-zag algebras}

\author{Elin Persson Westin}

\begin{abstract}
We classify generalized tilting modules and full exceptional sequences for 
the family of quasi-hereditary quotients of type $A$ zig-zag algebras 
and for a related family of algebras. We also give a
characterization of these quotients as quasi-hereditary algebras with simple
preserving duality that are ``close'' to self-injective algebras.
\end{abstract}

\maketitle

\noindent

{\bf 2010 Mathematics Subject Classification:} 16D10 16G20

\noindent

{\bf Keywords:} generalized tilting module; exceptional sequence; zig-zag algebra; 
extension; quasi-hereditary algebra

\section{Introduction and description of the results}\label{s1}

Tilting theory, which originates from \cite{BB,HR}, plays an important role in contemporary 
representation theory of finite-dim\-en\-sio\-nal algebras. At the same time, classification
of all tilting modules for a given algebra is usually a hard problem.  
This problem has been studied in various special cases, for example in \cite{BK,MU,Ya}, 
and some generalizations of this problem were studied in \cite{Ad}. 
For self-injective algebras this problem is trivial.
The generalized tilting modules for the Auslander algebra of $\Bbbk[x]/(x^n)$ were classified in \cite{Ge}.
This algebra is, in fact, quadratic dual 
to the main protagonist of the present note. In an earlier work, \cite{BHRR} characterized classical tilting modules for this algebra as the $\Delta$-filtered generalized tilting modules.

In the present note we classify all generalized tilting modules for the quasi-hereditary
quotient  $B_n$ of the type $A$ zig-zag algebra with $n$ simple modules (cf. \cite{KS,HK,ET}). These algebras describe, in particular, blocks of Temperley-Lieb algebras,
cf. \cite{Ma,BRSA}, and some blocks of the parabolic category $\mathcal{O}$
for $\mathfrak{sl}_n$, cf. \cite{St}.
These algebras are very close to self-injective algebras
(in the sense that all indecomposable projective modules but one are injective) and have
finite representation type. Both these factors significantly simplify our arguments.
As it turns out, see Theorem~\ref{thm4}, all generalized tilting modules for such algebras
are obtained by adding a standard or a costandard module (in the terminology of the 
quasi-hereditary structure) to a basic projective-injective module.

Other interesting, but much less studied, objects are so-called {\em exceptional sequences}
of modules. A recent preprint \cite{HP} provides a classification of full exceptional sequences
for the Auslander algebra of $\Bbbk[x]/(x^n)$, which is, as mentioned above, the quadratic dual of our algebra $B_n$.  In Theorem~\ref{thm8} we provide a classification
of full exceptional sequences for $B_n$. We show that each exceptional sequence
for $B_n$ is uniquely determined by choosing one module in each pair 
consisting of a standard and the corresponding costandard $B_n$-module.

Apart from these classification results, in Theorem~\ref{thm2} we characterize $B_n$
as the basic, connected, quasi-hereditary algebra with  $n$ simple modules which has
a simple preserving duality and for which all indecomposable projective modules
but one are injective.

The paper is organized as follows: all preliminaries are collected in Section~\ref{s2}.
Theorem~\ref{thm2} is proved in Section~\ref{s3}. Section~\ref{s4} contains a technical
result on non-vanishing of second self-extensions of certain indecomposable $B_n$-modules.
This result plays an important role both in the proof of Theorem~\ref{thm4}, which can be found in 
Section~\ref{s5}, and in the proof of Theorem~\ref{thm8}, which can be found in 
Section~\ref{s6}. In the last section, Section~\ref{s7}, we also solve similar classification 
problems for a slightly smaller quotient of type $A$ zig-zag algebras.

\section{Zig-zag algebras, their quotients and modules}\label{s2}

\subsection{Basic setup and notation}\label{s2.1}

We work over an algebraically closed field $\Bbbk$. For a finite-dimensional
$\Bbbk$-algebra $A$ given by some quiver
and (admissible) relations, we denote by $L(v)$ the simple $A$-module associated to a vertex 
$v$ of the  quiver. We also denote by $P(v)$ the indecomposable projective cover of $L(v)$
and by $I(v)$ the indecomposable injective envelope of $L(v)$. If not stated
otherwise, by module we mean a right module.

By a {\em simple preserving duality} of a module category we mean an involutive contravariant
autoequivalence which preserves the isomorphism classes of simple modules.
Simple preserving dualities are given by involutive algebra anti-automorphisms which 
fix pointwise some full set of primitive orthogonal idempotents.

We will often abuse notation and write $X=Y$ instead of $X\cong Y$ for two modules $X$ and $Y$.

\subsection{Zig-zag algebras}\label{s2.2}

Let $Q$ be a finite connected unoriented graph without loops and with at least one edge. 
Let $\tilde{Q}$ denote the quiver obtained
from $Q$ via substituting every edge $\xymatrix{i\ar@{-}[r]&j}$ in $Q$ by two oriented edges
$\xymatrix{i\ar@/^/[r]&j\ar@/^/[l]}$. We denote by $A_Q$ the quotient of the path algebra 
$\Bbbk\tilde{Q}$ of $\tilde{Q}$ by the ideal generated by the following relations:
\begin{itemize}
\item any path of length three is zero;

\item any path of length two which is not a cycle is zero;

\item for any vertex $v$, all length two cycles which start and terminate at $v$ are equal.

\end{itemize}

The algebra $A_Q$ is usually called the {\em zig-zag algebra} associated with $Q$, see \cite{HK,ET}.
Directly from the definition it follows that $A_Q$ is $\mathbb{Z}$-graded by path length and this grading is positive
in the sense that all non-zero homogeneous components have non-negative degree and the
degree zero component is semi-simple, see \cite{HK}.

It follows directly from the definition that, for each vertex $v$ in $Q$, the projective module
$P(v)$ has Loewy length three and is rigid in the sense that the radical and the socle filtrations
of $P(v)$ coincide. In particular, $P(v)$ has a unique Loewy filtration. The layers of this
unique Loewy filtration are given as follows:
\begin{itemize}
\item the top of $P(v)$ is isomorphic to $L(v)$;

\item the socle of $P(v)$ is isomorphic to $L(v)$;

\item the quotient $\mathrm{Rad}(P(v))/\mathrm{Soc}(P(v))$ is semisimple and, for any vertex $w$,
the multiplicity of $L(w)$ in this quotient equals the number of edges in $Q$ between $v$ and $w$.
\end{itemize}
Consequently, $P(v)\cong I(v)$ and $A_Q$ is a self-injective algebra.

\subsection{The algebras $A_n$, $B_n$ and $C_n$}\label{s2.3}

For $n\in\mathbb{Z}_{>1}$, we denote by $A_n$ the algebra $A_Q$, where $Q$ is the following 
Dynkin diagram of type $A$:
\begin{displaymath}
\xymatrix{
\mathtt{1}\ar@{-}[r]&\mathtt{2}\ar@{-}[r]&\mathtt{3}\ar@{-}[r]&\cdots\ar@{-}[r]& \mathtt{n}
}
\end{displaymath}
We denote by $B_n$ the quotient of $A_n$ by the additional relation that the length two loop at 
the vertex $\mathtt{n}$ is zero.
We denote by $C_n$ the quotient of $B_n$ by the additional relation that the length two loop at 
the vertex $\mathtt{1}$ is zero.
The vertices $\mathtt{1}$ and $\mathtt{n}$ are both leaves in the Dynkin diagram of type $A$. We therefore refer to these quotients as {\em leaf quotients of type $A$ zig-zag algebras}.

Both $B_n$ and $C_n$ inherit a $\mathbb{Z}$-grading from $A_n$. It is easy to see that 
$B_n$ is never self-injective, while $C_n$ is self-injective if and only if $n=2$.

The algebra $A_n$ has an involutive anti-automorphism given by swapping the arrows in each 
part $\xymatrix{i\ar@/^/[r]&j\ar@/^/[l]}$ from the definition. This induces 
involutive anti-auto\-morphisms on $B_n$ and $C_n$. In particular, module categories over
all these algebras have simple preserving dualities.

\subsection{Indecomposable modules over $A_n$, $B_n$ and $C_n$}\label{s2.4}

By Corollary 1 in \cite{HK} the algebra $A_n$ has $n(n+1)$ isomorphism classes of indecomposable modules. 
It is easiliy checked that the algebra $A_n$ is special biserial in the sense of \cite{BR,WW}. In particular, all indecomposable
$A_n$-modules split into three families: pro\-jective-injective modules, string modules and band modules, see Proposition 2.3 in \cite{WW}.
Even more, in case of the algebra $A_n$ there are no band modules. Apart from the projective-injective modules $P(i)$ and the simple modules $L(i)$, where $i=1,2\dots,n$,  we only have 
four finite families of string modules which look as follows:
\begin{itemize}
\item The modules $M(i,j)$, where $1\leq i<j\leq n$ are such that $i\equiv j\mod 2$:

\begin{displaymath}
\xymatrixcolsep{1pc}\xymatrix{
&L(\mathtt{i+1})\ar[dl]\ar[dr]&&\ar[dl]\dots&&L(\mathtt{j-1})\ar[dr]\ar[dl]&\\
L(\mathtt{i})&&L(\mathtt{i+2})&&\dots&& L(\mathtt{j})
}
\end{displaymath}

\item The modules $N(i,j)$, where $1\leq i<j\leq n$ are such that $i\not\equiv j\mod 2$:
\begin{displaymath}
\xymatrixcolsep{1pc}\xymatrix{
&L(\mathtt{i+1})\ar[dl]\ar[dr]&&\ar[dl]\dots&&L(\mathtt{j})\ar[dl]&\\
L(\mathtt{i})&&L(\mathtt{i+2})&&\dots&&
}
\end{displaymath}
\item The modules $W(i,j)$, where $1\leq i<j\leq n$ are such that $i\equiv j\mod 2$:
\begin{displaymath}
\xymatrixcolsep{1pc}\xymatrix{
L(\mathtt{i})\ar[dr]&&L(\mathtt{i+2})\ar[dl]\ar[dr]&&\dots\ar[dr]&&L(\mathtt{j})\ar[dl]\\
&L(\mathtt{i+1})&&\dots&&L(\mathtt{j-1})&
}
\end{displaymath}
\item The modules $S(i,j)$, where $1\leq i<j\leq n$ are such that $i\not\equiv j\mod 2$:
\begin{displaymath}
\xymatrixcolsep{1pc}\xymatrix{
L(\mathtt{i})\ar[dr]&&L(\mathtt{i+2})\ar[dl]\ar[dr]&&\dots\ar[dr]&&\\
&L(\mathtt{i+1})&&\dots&&L(\mathtt{j})&
}
\end{displaymath}
\end{itemize}
It is clear that all string modules in the four families above are indecomposable and not isomorphic to any projective-injective nor simple module. This completes the classification of the $n(n+1)$ isomorphism classes of indecomposable $A_n$-modules. 

All indecomposable $A_n$-modules but $P(n)$ are also $B_n$-modules.
All indecomposable $B_n$-modules but $P(1)$ are also $C_n$-modules.

\subsection{Quasi-hereditary structure on $B_n$}\label{s2.5}

Let $\Lambda$ be a finite-dimensional $\Bbbk$-algebra with a fixed order $L_1<L_2<\dots<L_k$
on a full set of representatives of simple $\Lambda$-modules. For $i=1,2,\dots,k$, we denote by
$P_i$ the indecomposable projective cover of $L_i$ and by 
$I_i$ the indecomposable injective envelope of $L_i$. Recall, see \cite{DR,CPS}, that 
$\Lambda$ is called {\em quasi-hereditary} (with respect to the order $<$) provided that there
are $\Lambda$-modules $\Delta_i$, for $i=1,2,\dots,k$, (called {\em standard modules}) with the properties that
\begin{itemize}
\item $P_i\tto \Delta_i$ and the kernel of this surjection has a filtration with
subquotients $\Delta_j$, for $j>i$;
\item $\Delta_i\tto L_i$ and the kernel of this surjection has composition 
subquotients $L_j$, for $j<i$.
\end{itemize}
Equivalently, there
are $\Lambda$-modules $\nabla_i$, for $i=1,2,\dots,k$, (called {\em costandard modules}) with the properties that
\begin{itemize}
\item $\nabla_i\hookrightarrow I_i$ and the cokernel of this injection has a filtration with
subquotients $\nabla_j$, for $j>i$;
\item $L_i\hookrightarrow \nabla_i$ and the cokernel of this injection has composition 
subquotients $L_j$, for $j<i$.
\end{itemize}

Following \cite{Ri}, we denote by $T_i$ the indecomposable tilting $\Lambda$-module corresponding to $i$.
It is uniquely determined by the properties that $\Delta_i\hookrightarrow T_i$ and the cokernel of 
this inclusion has a filtration with standard subquotients. Equivalently, $T_i\tto \nabla_i$
and the kernel of this surjection has a filtration with costandard subquotients.

The terminology here is slightly conflicting. We will refer to tilting modules in the sense of \cite{Mi} by \emph{generalized tilting modules} and those that are connected to the quasi-hereditary structure, in the sense of \cite{Ri}, simply by \emph{tilting modules}.

Consider the algebra $\Lambda=B_n$ and choose the order $L(1)<L(2)<\dots<L(n)$.
Set $\Delta(1)=\nabla(1)=L(1)$; $\Delta(i)=S(i-1,i)$, for $i>1$; and
$\nabla(i)=N(i-1,i)$, for $i>1$. Then, for $i<n$, there are obvious short exact sequences
\begin{displaymath}
\Delta(i+1)\hookrightarrow P(i)\tto \Delta(i)\qquad\text{ and }\qquad 
\nabla(i)\hookrightarrow I(i)\tto \nabla(i+1).
\end{displaymath}
Consequently, $B_n$ is quasi-hereditary. It is easy to see that $B_n$ is {\em not} quasi-hereditary
with respect to any other order on the full set of representatives of isomorphism classes of simple
$B_n$-modules. Set 
\begin{displaymath}
\mathbf{D}:=\{\Delta(n),\Delta(n-1),\dots,\Delta(1)=\nabla(1),\nabla(2),\dots,\nabla(n)\}. 
\end{displaymath}

\begin{proposition}\label{cor1}
For a quasi-hereditary algebra $\Lambda$ we have the following vanishing extensions. 
For all $k>0$ and for all $i\leq j$, we have
\begin{displaymath}
\mathrm{Ext}_{\Lambda}^k(\Delta_j,\Delta_i)=0 \text{ and } \mathrm{Ext}_{\Lambda}^k(\nabla_i,\nabla_j)=0.
\end{displaymath}

For all $k>0$ and for all $i,j$, we have
\begin{displaymath}
\mathrm{Ext}_{\Lambda}^k(\Delta_i,\nabla_j)=0.
\end{displaymath}

In particular, for any $X\in \mathbf{D}$ and $k>0$, we have 
\begin{displaymath}
\mathrm{Ext}^k_{B_n}(X,X)=0. 
\end{displaymath}
\end{proposition}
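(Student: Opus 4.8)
\emph{Proof plan.} The first three displayed statements are standard facts in the theory of quasi-hereditary algebras; see \cite{DR,CPS,Ri}. I sketch the arguments, all of which proceed by d\'evissage along the $\Delta$- and $\nabla$-filtrations supplied by the definition, together with two elementary observations. First, the largest standard and costandard modules are projective, resp.\ injective: $\Delta_k=P_k$ and $\nabla_k=I_k$, because the relevant kernel (resp.\ cokernel) in the definition is then filtered by the empty family. Second, $\mathrm{Hom}_\Lambda(\Delta_m,\Delta_i)=0$ and $\mathrm{Hom}_\Lambda(\nabla_i,\nabla_m)=0$ whenever $m>i$, since a nonzero such map would force the simple top $L_m$ of $\Delta_m$ (resp.\ the simple socle $L_m$ of $\nabla_m$) to be a composition factor of $\Delta_i$ (resp.\ of $\nabla_i$), contradicting that all composition factors of $\Delta_i$ and of $\nabla_i$ are of the form $L_\ell$ with $\ell\le i$.

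To prove $\mathrm{Ext}^k_\Lambda(\Delta_j,\Delta_i)=0$ for $k>0$ and $i\le j$, I argue by downward induction on $j$. For $j=k$ the module $\Delta_k=P_k$ is projective and there is nothing to prove. For the induction step, apply $\mathrm{Hom}_\Lambda(-,\Delta_i)$ to the short exact sequence $0\to K_j\to P_j\to \Delta_j\to 0$ from the definition, where $K_j$ has a $\Delta$-filtration with subquotients $\Delta_m$, $m>j$. For each such $m$ we have $m>j\ge i$, so $\mathrm{Ext}^{>0}_\Lambda(\Delta_m,\Delta_i)=0$ by the inductive hypothesis and $\mathrm{Hom}_\Lambda(\Delta_m,\Delta_i)=0$ by the second observation; d\'evissage along the filtration of $K_j$ then gives $\mathrm{Hom}_\Lambda(K_j,\Delta_i)=0$ and $\mathrm{Ext}^{>0}_\Lambda(K_j,\Delta_i)=0$. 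Since $P_j$ is projective, the long exact sequence yields $\mathrm{Ext}^k_\Lambda(\Delta_j,\Delta_i)\cong\mathrm{Ext}^{k-1}_\Lambda(K_j,\Delta_i)=0$ for $k\ge 2$, while $\mathrm{Ext}^1_\Lambda(\Delta_j,\Delta_i)$ occurs as a quotient of $\mathrm{Hom}_\Lambda(K_j,\Delta_i)=0$ and hence vanishes as well. The second statement follows from the first applied to the opposite algebra $\Lambda^{\mathrm{op}}$, which is again quasi-hereditary for the same order with standard modules the $\Bbbk$-linear duals of the $\nabla_i$, via the natural isomorphism $\mathrm{Ext}^k_\Lambda(\nabla_i,\nabla_j)\cong\mathrm{Ext}^k_{\Lambda^{\mathrm{op}}}(D\nabla_j,D\nabla_i)$, where $D=\mathrm{Hom}_\Bbbk(-,\Bbbk)$; for $B_n$ itself one may instead invoke the simple preserving duality of Section~\ref{s2.3}, which interchanges $\Delta(i)$ and $\nabla(i)$. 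The third statement, the classical ``homological characterization'' input, is proved by the same pattern of downward induction on $i$, now applying $\mathrm{Hom}_\Lambda(-,\nabla_j)$ to $0\to K_i\to P_i\to\Delta_i\to 0$ and using $\Delta_k=P_k$ in the base case; here the bookkeeping of the $\mathrm{Ext}^1$-term is slightly more delicate and rests on the finer fact that $\mathrm{Hom}_\Lambda(\Delta_i,\nabla_j)$ is one-dimensional for $i=j$ and zero otherwise (again \cite{DR,CPS,Ri}).

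Finally, every $X\in\mathbf{D}$ is of the form $\Delta(i)$ or $\nabla(i)$ for some $i\in\{1,\dots,n\}$, recalling that $\Delta(1)=\nabla(1)=L(1)$. Taking $j=i$ in the first statement gives $\mathrm{Ext}^k_{B_n}(\Delta(i),\Delta(i))=0$ for all $k>0$, and taking $i=j$ in the second gives $\mathrm{Ext}^k_{B_n}(\nabla(i),\nabla(i))=0$ for all $k>0$; together these prove the ``in particular'' assertion. The only genuinely delicate point in the whole argument is the vanishing of the various $\mathrm{Ext}^1$-terms at each inductive stage, which is precisely where the elementary $\mathrm{Hom}$-vanishing (and, for the third statement, the exact computation of $\mathrm{Hom}_\Lambda(\Delta_i,\nabla_j)$) is fed in; everything else is routine d\'evissage.
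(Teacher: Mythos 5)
Your argument is correct, but it takes a genuinely different route from the paper. For the vanishing $\mathrm{Ext}^k_\Lambda(\Delta_j,\Delta_i)=0$ (and dually for costandards), the paper does not run a d\'evissage at all: it invokes Lemma~2.2 of \cite{KK} (see also \cite[Theorem 2.3]{CPS}), which says that $\mathrm{Ext}$-groups between modules over the quotient $\Lambda/(\Lambda e\Lambda)$ by a heredity ideal agree with those over $\Lambda$, to reduce immediately to the case where $j$ is maximal, so that $\Delta_j$ is projective and $\nabla_j$ is injective and the vanishing is trivial; your downward induction along $0\to K_j\to P_j\to\Delta_j\to 0$, together with the elementary $\mathrm{Hom}$-vanishing $\mathrm{Hom}_\Lambda(\Delta_m,\Delta_i)=0$ for $m>i$, proves the same thing from scratch, at the cost of a longer argument but without needing the heredity-ideal comparison result. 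Your reduction of the $\nabla$-statement to the $\Delta$-statement via $\Lambda^{\mathrm{op}}$ (or, for $B_n$, the simple preserving duality) is fine and parallels the paper's symmetric treatment. For $\mathrm{Ext}^k_\Lambda(\Delta_i,\nabla_j)=0$ both you and the paper ultimately defer to the literature (the paper cites \cite[Proposition 2.1]{KK} and \cite[Corollary 3]{Ri}); be aware, though, that your sketched induction is under-specified at the $\mathrm{Ext}^1$-step when $i\leq j$: the surjectivity of $\mathrm{Hom}_\Lambda(P_i,\nabla_j)\to\mathrm{Hom}_\Lambda(K_i,\nabla_j)$ does not follow from $\dim\mathrm{Hom}_\Lambda(\Delta_i,\nabla_j)=\delta_{ij}$ alone by a dimension count (that count is essentially BGG reciprocity, which is usually deduced from this very vanishing); one instead extends a map $K_i\to\nabla_j$ to $P_i\to I_j$ by injectivity and uses that $I_j/\nabla_j$ is filtered by $\nabla_m$ with $m>j$, so that $\mathrm{Hom}_\Lambda(\Delta_i,I_j/\nabla_j)=0$. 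The deduction of the final claim about $\mathbf{D}$ is the same in both treatments.
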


\begin{proof}
Using Lemma 2.2 in \cite{KK}, see also \cite[Theorem 2.3]{CPS}, in the first claim we may assume that $j$ is maximal, so 
$\Delta_j$ is projective and $\nabla_j$ is injective. The claim is then clear. For the second claim, see Proposition~2.1 in \cite{KK} or Corollary~3 in \cite{Ri}.
The third claim is a special case of the first one.

\end{proof}

Note that, for the algebra $B_n$, we have  $T(1)=L(1)$ and $T(i)=P(i-1)=I(i-1)$, for $i>1$.

\section{A characterization of the algebra $B_n$}\label{s3}

In this section we propose a characterization of the quasi-hereditary algebras $B_n$ inside the
class of all quasi-hereditary algebras. Our result says that the algebras $B_n$ are non-semi-simple
quasi-hereditary algebras which are, in some sense, ``closest'' to self-injective algebras.

\begin{theorem}\label{thm2}
Let $\Lambda$ be a basic, connected, quasi-hereditary algebra with respect to some order
$L_1<L_2<\dots<L_n$, where $n\in\mathbb{Z}_{>0}$. Assume the following:
\begin{enumerate}[$($a$)$]
\item\label{thm2.1} Exactly $n-1$ of indecomposable projective $\Lambda$-modules are injective.
\item\label{thm2.2} The algebra $\Lambda$ has a simple preserving duality. 
\end{enumerate}
Then $n>1$ and $\Lambda\cong B_{n}$.
\end{theorem}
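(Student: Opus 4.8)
The plan is to reconstruct $\Lambda$ from the combinatorial constraints, building the quiver and relations vertex by vertex and matching against the known presentation of $B_n$. First I would dispose of the case $n=1$: a basic connected algebra with one simple module which is quasi-hereditary is semisimple, hence $\Lambda=\Bbbk$, and then its unique indecomposable projective is injective, contradicting hypothesis \eqref{thm2.1} which forces exactly $n-1=0$ projectives to be injective — wait, that is consistent, so instead I would note that for $n=1$ quasi-heredity forces $\Delta_1=P_1=L_1$, so $\Lambda$ is semisimple and $P_1=I_1$ is injective, giving $n-1=1\neq 0$ projectives injective; this contradiction yields $n>1$. The bulk of the work is the isomorphism $\Lambda\cong B_n$, and here I would proceed as follows. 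Let $P_n$ be the unique non-injective indecomposable projective (by \eqref{thm2.1} exactly one is non-injective; I would first argue it must be $P_n$, the one attached to the maximal vertex, using that $\Delta_n=P_n$ and that if $P_i$ were injective for the maximal $i$ then $\Delta_n=P_n=I_n=\nabla_n$ would be both standard and costandard, which combined with \eqref{thm2.2} and the filtration properties propagates to show all $\Delta_i\cong\nabla_i$, forcing $\Lambda$ semisimple, contradicting connectedness with $n>1$).

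Next I would extract the structure of the $n-1$ projective-injective modules. The simple preserving duality \eqref{thm2.2} interchanges $P_i$ and $I_i$ for each $i$; since $P_i=I_i$ for $i<n$, each such $P_i$ is self-dual, in particular $\mathrm{top}(P_i)\cong\mathrm{soc}(P_i)\cong L_i$. Quasi-heredity gives a $\Delta$-flag on $\mathrm{Rad}(P_i)$ with sections $\Delta_j$, $j>i$, and dually a $\nabla$-flag on $P_i/\mathrm{soc}$; self-injectivity of these $P_i$ plus the duality force the Loewy length of each $P_i$ ($i<n$) to be exactly $3$ — longer would, via the BGG-type reciprocity $[P_i:\Delta_j]=[\nabla_j:L_i]$ together with $P_i=I_i$, create a simple module appearing with multiplicity forcing a projective resolution incompatible with the Ext-vanishing of Proposition~\ref{cor1}; I would make this precise by a counting/Loewy-length argument showing $\dim\mathrm{Ext}^1(L_i,L_j)\le 1$ for all $i,j$ and $=0$ unless $|i-j|=1$, so that the quiver of $\Lambda$ is a subquiver of the double of a tree, and connectedness plus the $P_i\cong I_i$ condition pins the underlying graph down to be linear $A_n$ (any branch vertex would produce a projective-injective of Loewy length exceeding $3$ or violate rigidity under the duality). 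Then for each $i<n$ one reads off $\mathrm{Rad}(P_i)/\mathrm{Soc}(P_i)\cong L_{i-1}\oplus L_{i+1}$ (with the obvious truncation at $i=1$), exactly as in the zig-zag relations.

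Finally I would determine $P_n$: it surjects onto $\Delta_n$, and since $P_n$ is not injective while $P_{n-1}=I_{n-1}$, comparing $\mathrm{Hom}$ and $\mathrm{Ext}^1$ from $P_n$ into the already-identified modules shows $\mathrm{Rad}(P_n)$ has a $\Delta$-flag with top section controlled by the single edge $n\!-\!1 \,\text{---}\, n$, and the failure of injectivity is precisely the statement that the length-two loop at vertex $\mathtt{n}$ vanishes — i.e. $\mathrm{soc}(P_n)\cong L_{n-1}$ rather than $L_n$, matching the defining relation of $B_n$ over $A_n$. Assembling: the quiver of $\Lambda$ is $\widetilde{A_n}$, the relations are forced to be exactly the zig-zag relations together with the vanishing of the loop at $\mathtt n$ (admissibility plus the computed Loewy layers leave no freedom, since $\Bbbk$ is algebraically closed so the coefficients in the "all length-two cycles at $v$ are equal" relation can be normalized), hence $\Lambda\cong B_n$. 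The main obstacle is the rigidity/Loewy-length argument in the second paragraph: showing that \eqref{thm2.1} and \eqref{thm2.2} alone force every projective-injective $P_i$ to have Loewy length $3$ and force the underlying graph to be the linear $A_n$ diagram, rather than some longer or branched configuration — everything after that is bookkeeping against the explicit presentation of $B_n$.
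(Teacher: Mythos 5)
Your strategy (direct reconstruction of the quiver and relations) is not the paper's (which argues by induction on $n$ via the quotient $\Lambda/(\Lambda e\Lambda)$), and as written it has two genuine gaps. First, your identification of the non-injective projective: you only treat the possibility $P_n\cong I_n$, but hypothesis \eqref{thm2.1} a priori allows $P_n\cong I_j$ with $j<n$, and excluding that case is where real work is needed. The paper does it by noting that a projective-injective module is a tilting module for the quasi-hereditary structure, that the only indecomposable tilting module containing $L_n$ as a composition factor is $T_n$, and that $T_n^\star\cong T_n$; hence $P_n\cong I_j$ would give $P_n=T_n=T_n^\star=I_j^\star=P_j$, a contradiction. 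Even in the case $P_n\cong I_n$ your sketch (``both standard and costandard propagates to $\Delta_i\cong\nabla_i$ for all $i$, so $\Lambda$ is semisimple'') is not an argument; the clean point is that $[\Delta_n:L_n]=1$ together with top and socle both being $L_n$ forces $P_n=L_n$, which splits off as a block and contradicts connectedness once $n>1$.

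Second, and more seriously, the heart of the theorem --- that each projective-injective is rigid of Loewy length $3$, that the quiver is the doubled linear $A_n$ diagram, and that $\mathrm{soc}(P_n)\cong L_{n-1}$ --- is exactly what you defer (``the main obstacle''), and the justification you do offer for ruling out branch vertices is incorrect. Take the zig-zag algebra of a branched tree (say a star with three arms) and kill the length-two cycle at one leaf: this algebra is basic, connected, carries a simple preserving duality, and has exactly $n-1$ projective-injective indecomposable projectives, all of them rigid of Loewy length $3$. So ``Loewy length exceeding $3$ or failure of rigidity under the duality'' cannot be what excludes branches; what fails for such an algebra is quasi-heredity itself, and detecting that requires using the quasi-hereditary structure in an essential way. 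This is precisely what the paper's induction supplies: $T_n=P_j$ forces $P_n$ to have simple socle $L_j$ and $\mathrm{Hom}_\Lambda(P_n,P_s)=0$ for $s\neq j$ (so no arrows between vertex $n$ and any $s\neq j$), then $j=n-1$ is forced because in $\Lambda/(\Lambda e\Lambda)$ the module $P_{n-1}$ would otherwise have to be a standard module with $[P_{n-1}:L_{n-1}]\geq 2$, and finally the induction hypothesis applies to $\Lambda/(\Lambda e\Lambda)$. Without an argument of this kind, your claimed bound ``$\mathrm{Ext}^1(L_i,L_j)=0$ unless $|i-j|=1$'' presupposes the conclusion rather than proving it, so the proposal does not yet constitute a proof.
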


\begin{proof}
By \eqref{thm2.2}, $\Lambda$ has a simple preserving duality which we denote by $\star$.
Then $P_i^\star=I_i$, for all $i$.

Our first observation is that the module $P_n$ is not injective. Indeed, assume that $P_n\cong I_j$,
for some $j$. If $j=n$, then $P_n$ has top and socle isomorphic to $L_n$ and this simple module also
appears in $P_n=\Delta_n$ with multiplicity $1$. Therefore $P_n=I_n=L_n$. This means that $P_n$ is a direct 
summand of $\Lambda$ as an algebra. As $\Lambda$ is assumed to be connected, we obtain that $n=1$.
At the same time, the only basic quasi-hereditary 
algebra with one isomorphism class of simple modules is $\Bbbk$ and this algebra does not satisfy 
\eqref{thm2.1} as there is at least one projective-injective module. Therefore $P_n=I_n$ is not possible. As a bonus, we now know that $n>1$.

Assume that $j<n$. Then $P_n=I_j$ is a tilting module and it contains $L_n$. Therefore
$P_n=T_n$. As $\star$ is a simple preserving duality, we have $T_n^{\star}=T_n$ and therefore
$P_n=T_n=T_n^{\star}=I_j^{\star}=P_j$, which is a contradiction. This shows that 
$P_n$ is not injective.

We prove the claim of the theorem by induction on $n$. By the above, the basis of the induction
is the case $n=2$. We always have $T_1=L_1$. As $P_1=I_1$ is projective-injective, it is a tilting module.
If $L_1=T_1=P_1=I_1$, then, by the same argument as in the second paragraph of the proof we get a 
contradiction using the fact that $\Lambda$ is connected. Therefore $P_1=T_2$. In particular,
$\Delta_2$  embeds into $T_2=I_1$ and hence $\mathrm{soc}(\Delta_2)=L_1$.

So, the module $\Delta_2$ has simple top $L_2$ and simple socle $L_1$. If the composition 
multiplicity of $L_1$ in $\Delta_2$ were greater than $1$, that would imply existence of
a non-split self-extension for the simple module $L_1$. But since 
$\Lambda$ is assumed to be quasi-hereditary
%$L_1=\Delta_1$ 
no such self-extension can exist.
This means
that there is a short exact sequence
\begin{displaymath}
0\to L_1\to  \Delta_2\to L_2\to 0.
\end{displaymath}
As $P_2=\Delta_2$, it follows that the quiver of $\Lambda$ has exactly one arrow from the
vertex of $L_2$ to the vertex of $L_1$, call it $\alpha$. 
Applying $\star$, we see that there is exactly one arrow in the opposite direction,
call it $\beta$. As the composition multiplicity of $L_2$ in $P_2=\Delta_2$ is $1$, we have
$\alpha\beta=0$. At the same time, the composition multiplicity of $L_2$ in $P_2=I_2=T_1$ is 
at least $2$ as $P_2$ has a copy of  $L_2$ at the top, $I_2$ has a copy of  $L_2$ in the socle
and $T_2$ has a copy of $L_1$ somewhere. It follows that $\beta\alpha\neq 0$ and
we obtain that $\Lambda\cong A_2$. This completes the basis of our induction.

To prove the induction step, we assume that $n\geq 3$. Similarly to the case $n=2$,
we have that $T_1=L_1$ is not projective.
Therefore $L_1,P_1,P_2,\dots,P_{n-1}$ is a complete list of representatives of isomorphism
classes of tilting $\Lambda$-modules.  In particular, $T_n=P_j$, for some $j<n$.
The latter means that $\Delta_n=P_n$ embeds into $P_j$ and hence 
$P_n$ has simple socle $L_j$. Also, $\mathrm{Hom}_{\Lambda}(P_n,P_s)=0$, for all 
${s\in\{1,2,\dots,j-1,j+1,\dots,n-1\}}$ since for such $s$ we have $P_s=T_i$ for some $i<n$ and the composition multiplicity of $L_n$ in such tilting modules is zero. This implies that the quiver of $\Lambda$ contains
no arrows from the vertex of $L_n$ to the vertex of such $L_s$. By applying $\star$,
we see that there are no arrows back either. Therefore the only arrows from the vertex of 
$L_n$ in the quiver of $\Lambda$ are those going to the vertex of $L_j$ and the  only arrows to the vertex of  $L_n$ in the quiver of $\Lambda$ are those going from the 
vertex of $L_j$.

The equality $\mathrm{Hom}_{\Lambda}(P_n,P_s)=\mathrm{Hom}_{\Lambda}(P_n,I_s)=0$ 
also means that $P_n$ has no composition subquotients $L_s$ with $s$ as in the previous paragraph.
We claim that this implies that $j=n-1$. Indeed, assume $j<n-1$ and let $e$ be the
primitive idempotent of $\Lambda$ corresponding to $L_n$. Consider the quasi-hereditary
algebra $\Lambda/(\Lambda e\Lambda)$. As $L_n$ does not appear in $P_{n-1}$, the
module $P_{n-1}$ is a module over $\Lambda/(\Lambda e\Lambda)$. As $n-1$ is now maximal
and $\Lambda/(\Lambda e\Lambda)$ is quasi-hereditary, the module $P_{n-1}$ must be
a standard module for $\Lambda/(\Lambda e\Lambda)$. If $P_{n-1}=I_{n-1}=L_{n-1}$,
we obtain the same contradiction with connectedness of $\Lambda$ as above.
If $P_{n-1}=I_{n-1}$ is not simple, then the multiplicity of $L_{n-1}$ in $P_{n-1}$
must be at least two. This is not possible for a standard module.
Therefore $j=n-1$.

Similarly to the case $n=2$, we see that there is exactly one arrow from the
vertex of $L_n$ to the vertex of $L_{n-1}$, call it $\alpha$. 
Applying $\star$, we see that there is exactly one arrow in the opposite direction,
call it $\beta$. As the composition multiplicity of $L_n$ in $P_n=\Delta_n$ is $1$, we have
$\alpha\beta=0$. Furthermore the precomposition of $\alpha$ with any other arrow in the
quiver of $\Lambda$ is zero and, similarly, the postcomposition of $\beta$
with any other arrow in the quiver of $\Lambda$ is zero.

Now consider the basic quasi-hereditary 
algebra $\Lambda/(\Lambda e\Lambda)$. Its quiver is obtained from the
quiver of $\Lambda$ by deleting the vertex of $L_n$ and the arrows $\alpha$ and $\beta$.
Therefore $\Lambda/(\Lambda e\Lambda)$ is connected as $\Lambda$ was connected and
the vertex of $L_n$ only had arrows to and from the vertex of $L_{n-1}$ by the previous paragraph. The simple preserving duality on
$\Lambda$ induces a simple preserving duality on $\Lambda/(\Lambda e\Lambda)$. The 
projective-injective modules $P_1,P_2,\dots,P_{n-2}$ of $\Lambda$ are
also modules over $\Lambda/(\Lambda e\Lambda)$ and  remain 
projective-injective. Therefore $\Lambda/(\Lambda e\Lambda)$ satisfies all
assumptions of our theorem and has one less projective than $\Lambda$. 
Therefore we can now apply the induction assumption.

By induction, we have $\Lambda/(\Lambda e\Lambda)\cong B_{n-1}$. Also, from the above
we see that the quiver of $\Lambda$ is isomorphic to the quiver for $B_{n}$.
From what we concluded in the induction step we have that there are exact sequences
\begin{displaymath}
0\to \Delta_n\to P_{n-1}\to \Delta_{n-1}\to 0,
\end{displaymath}
\begin{displaymath}
0\to L_{n-1}\to \Delta_{n}\to L_{n}\to 0,\qquad 
0\to L_{n-2}\to \Delta_{n-1}\to L_{n-1}\to 0.
\end{displaymath}
As $P_{n-1}=I_{n-1}$ has simple socle and 
\begin{displaymath}
\mathrm{Ext}^{1}_{\Lambda}(L_n,L_{n-2})=\mathrm{Ext}^{1}_{\Lambda}(L_{n-2},L_{n})=0
\end{displaymath}
by the above, we obtain that $P_{n-1}$ has simple top and socle isomorphic to $L_{n-1}$
and $\mathrm{rad}(P_{n-1})/\mathrm{soc}(P_{n-1})=L_n\oplus L_{n-2}$. This means that 
in the part
\begin{displaymath}
\xymatrix{ 
L_{n-2}\ar@/^/[r]^{\delta}&L_{n-1}\ar@/^/[r]^{\beta}\ar@/^/[l]^{\gamma}&L_{n}\ar@/^/[l]^{\alpha} 
} 
\end{displaymath}
of the quiver of $\Lambda$ we have the relation $\gamma\delta=c\beta\alpha$, for some non-zero $c\in\Bbbk$.
Changing $\beta$ to $\beta'=c\beta$ defines an  isomorphism from $\Lambda$ to $B_{n}$.
This completes the proof.
\end{proof}

\section{Non-vanishing of $\mathrm{Ext}^2_{B_n}$}\label{s4}

\subsection{The result}\label{s4.1}

The aim of this subsection is to prove the following technical result.

\begin{proposition}\label{prop3}
Let $X$ be an indecomposable 
$B_n$-module. Assume that $X$ is neither projective nor in $\mathbf{D}$. Then
$\mathrm{Ext}^2_{B_n}(X,X)\neq0$.
\end{proposition}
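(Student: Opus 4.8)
The plan is to go through the explicit list of indecomposable $B_n$-modules from Section~\ref{s2.4} and verify the non-vanishing of $\mathrm{Ext}^2_{B_n}(X,X)$ for each module $X$ that is neither projective (i.e.\ not $P(i)$ for $i<n$) nor a member of $\mathbf{D}$ (i.e.\ not a standard or costandard module). Since $B_n$ is obtained from $A_n$ by killing $P(n)$, its indecomposables are the $P(i)$ with $i<n$, the simples $L(i)$, and the four string families $M(i,j)$, $N(i,j)$, $W(i,j)$, $S(i,j)$. Among these, $S(i,i+1)=\Delta(i+1)$ and $N(i,i+1)=\nabla(i+1)$, as well as $L(1)=\Delta(1)=\nabla(1)$, lie in $\mathbf{D}$ and must be excluded; everything else must be shown to have a non-zero second self-extension.

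\textbf{Key steps.} First I would compute a minimal projective resolution, or at least its first three terms, for each relevant string module. For a string module over a special biserial algebra the syzygies are again (sums of) string modules, and one can read off the projective cover and the first syzygy combinatorially from the string picture; iterating once more gives $\Omega^2 X$. Then $\mathrm{Ext}^2_{B_n}(X,X)\neq 0$ follows as soon as one exhibits a non-zero homomorphism $P_2\to X$ that does not factor through $P_1\to P_2$, equivalently a non-zero map $\Omega^2 X\to X$ (or more precisely a map $\Omega^2 X \to X$ not factoring through the inclusion $\Omega^2 X \hookrightarrow P_1$), which is usually visible by matching the top of $\Omega^2 X$ with a composition factor of $X$ occurring in the socle or appropriately deep in $X$. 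A cleaner packaging: since $B_n$ is Koszul-ish and the grading is positive, and the relevant modules are ``thin'' (multiplicity-free in each graded layer), I expect that $\Omega X$ and $\Omega^2 X$ stay small and concrete, so the computation reduces to a handful of cases up to the obvious symmetries (the duality $\star$ swaps the two ``shapes'' within each parity class, so $M\leftrightarrow W$-type and $N\leftrightarrow S$-type arguments are dual to each other, roughly halving the work). For the simple modules $L(i)$ with $2\le i\le n$ (and $L(1)$ is excluded), one uses that the quiver of $B_n$ has, at every vertex $i$ with $1<i<n$, a length-two cycle through a neighbour, and at vertex $n$ a single arrow; the minimal resolution of $L(i)$ then has $\Omega^2 L(i)$ containing $L(i)$ in its top for all $i\ge 2$, giving the non-vanishing, while at $i=1$ the asymmetry of the relation (the loop at $\mathtt{1}$ is \emph{not} killed in $B_n$, only in $C_n$) is exactly what makes $L(1)=\Delta(1)$ behave differently.

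\textbf{Main obstacle.} The main difficulty is bookkeeping: there are four infinite families plus the simples, and one must identify precisely which members are the standard/costandard modules to be excluded and then, for all the rest, pin down $\Omega^2 X$ accurately enough to locate a common composition factor in the right position. The genuinely delicate cases are the ones ``near the boundary'' — strings that touch the vertex $\mathtt{n}$, where the shortened loop changes the shape of the projective cover $P(n-1)$ and hence of the syzygies, and the short strings like $M(i,i+2)$, $N(i,i+2)$, $W(i,i+2)$, $S(i,i+2)$ which are close to being in $\mathbf{D}$ but are not. I would handle the generic interior strings by a uniform syzygy computation and then treat the boundary-touching and shortest strings as a short finite list of special cases, in each of which one directly writes down a non-split element of $\mathrm{Ext}^2_{B_n}(X,X)$ as a two-step extension or via an explicit non-factoring chain map.
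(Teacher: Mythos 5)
Your proposal is correct and follows essentially the same route as the paper: run through the classification of indecomposables, compute the first two syzygies of each non-excluded string or simple module, and detect $\mathrm{Ext}^2_{B_n}(X,X)\neq 0$ via a map $\Omega^2X\to X$ that does not factor through the projective cover of $\Omega^1 X$. The only cosmetic differences are that the paper first reduces to the case $j=n$ (respectively $i=n$) by passing to an idempotent quotient, and treats all four string families separately rather than halving the work with the duality $\star$ as you suggest; both refinements are compatible and sound.
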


Since we have a complete classification of indecomposable $B_n$-modules, to prove Proposition~\ref{prop3},
we need only to consider the cases when $X$ is either a simple module $L(i)$, with $i>1$, or one of the modules $W(i,j)$, $M(i,j)$, $N(i,j)$, $S(i,j)$ from Subsection~\ref{s2.4}, with $j>i+1$.

To simplify notation, in the following sections,
for a subset $Z\subset\{0,1,2,\dots,n\}$, we set 
\begin{displaymath}
P_Z=\bigoplus_{z\in Z}P(z). 
\end{displaymath}
For two integers $i\leq j$, we denote by $[i,j]$ 
the set of all integers $s$ satisfying the conditions $i\leq s\leq j$ and $s\equiv i\mod 2$.

\subsection{Proof of Proposition~\ref{prop3} for modules $W(i,j)$}\label{s4.2}
Let $e_k$ be the idempotent corresponding to $L(k)$ and let $J=B_neB_n$, where $e=e_{j+1}+\dots + e_n$. Then, by Lemma 2.2 in \cite{KK}, for any modules $X,Y\in B_n/J-\mathrm{mod}$ we have 
\begin{displaymath}
\mathrm{Ext}^k_{B_n/J}(X,Y)=\mathrm{Ext}^k_{B_n}(X,Y).
\end{displaymath}
This allows us to assume $j=n$ without losing any generality.

From the definition of $W(i,n)$, there is a surjection $P_{[i,n]}\tto W(i,n)$.
It is easy to see that the kernel of this surjection is isomorphic to $W(i-1,n-1)$, if $i\neq 1$,
and to $N(1,n-1)$, if $i=1$.

Consider first the case $i=1$. In this case there is a surjection from $P_{[2,n-1]}$ to $N(1,n-1)$
and it is easy to see that the kernel of this surjection is isomorphic to $N(2,n)$. The obvious
injection $N(2,n)\hookrightarrow W(1,n)$ does not factor through $P_{[2,n-1]}$ as
the latter maps only to the radical of $W(1,n)$ while the image of the injection 
$N(2,n)\hookrightarrow W(1,n)$ is not contained in the radical. This implies that the second self-extension of $W(1,n)$
does not vanish.

Now, consider the case $i=2$. In this case there is a surjection from $P_{[1,n-1]}$ to $W(1,n-1)$
and it is easy to see that the kernel of this surjection is isomorphic to $N(1,n)$. The obvious
projection $N(1,n)\tto W(2,n)$ does not factor through $P_{[1,n-1]}$ as
the latter maps only to the radical of $W(2,n)$. This implies that the second self-extension 
of $W(1,n)$ does not vanish.

Finally, consider the case $i>2$. In this case there is a surjection from $P_{[i-1,n-1]}$ to 
$W(i-1,n-1)$ and it is easy to see that the kernel of this surjection is isomorphic to $W(i-2,n)$. 
The obvious projection $W(i-2,n)\tto W(i,n)$ does not factor through $P_{[i-1,n-1]}$ as
the latter maps only to the radical of $W(i,n)$. This implies that the second self-extension 
of $W(i,n)$ does not vanish.

\subsection{Proof of Proposition~\ref{prop3} for modules $M(i,j)$}\label{s4.3}

Similarly to Subsection~\ref{s4.2}, we may assume $n=j$ without losing any generality.

From the definition of $M(i,n)$,  there is a surjection $P_{[i+1,n-1]}\tto M(i,n)$.
It is easy to see that the kernel of this surjection is isomorphic to $M(i+1,n-1)$, if $i<n-2$,
and to $L(n-1)$, if $i=n-2$.

Consider first the case $i=n-2$. In this case the kernel of $P(n-1)\tto L(n-1)$ is isomorphic to
$W(n-2,n)$. The dimension of the homomorphism space from $W(n-2,n)$ to $M(n-2,n)$ is two,
where one of the homomorphisms factors through $L(n)$ and the other one factors through $L(n-2)$.
At the same time, the dimension of the homomorphism space from $P(n-1)$ to $M(n-2,n)$ is one.
This implies that the second self-extension  of $M(n-2,n)$ does not vanish.

Consider now the case $i=n-4$. In this case, by recursion, the kernel of the surjection
$P(n-2)\tto M(n-3,n-1)$ is isomorphic to $L(n-2)$. The obvious injection $L(n-2)\hookrightarrow M(n-4,n)$ 
does not factor through $P(n-2)$ as the only map from $P(n-2)$ to $M(n-4,n)$ sends the
radical of $P(n-2)$ to zero. This implies that the second self-extension  of $M(n-4,n)$ does not vanish.

Finally, consider now the case $i<n-4$. In this case, by recursion, the kernel of 
$P_{[i+2,n-2]}\tto M(i+1,n-1)$ is isomorphic to $M(i+2,n-2)$. The obvious injection 
$M(i+2,n-2)\hookrightarrow M(i,n)$ does not factor through $P_{[i+2,n-2]}$
as the image of $M(i+2,n-2)$ in $P_{[i+2,n-2]}$ covers the socle of $P_{[i+2,n-2]}$
and any map from $P_{[i+2,n-2]}$ to $M(i,n)$ sends the socle of $P_{[i+2,n-2]}$ to zero.
This implies that the second self-extension  of $M(i,n)$ does not vanish.

\subsection{Proof of Proposition~\ref{prop3} for modules $N(i,j)$}\label{s4.4}

Similarly to Subsection~\ref{s4.2}, we may assume $n=j$ without losing any generality.
Note that $i<n-2$ as $N(n-1,n)\in\mathbf{D}$ and we must have $i\not\equiv n \mod 2$.

From the definition of $N(i,n)$, it follows that there is a surjection $P_{[i+1,n]}\tto N(i,n)$
and it is easy to see that the kernel of this surjection is isomorphic to $N(i+1,n-1)$. By recursion, there is a surjection $P_{[i+2,n-1]}\tto N(i+1,n-1)$
and it is easy to see that the kernel of this surjection is isomorphic to $N(i+2,n)$.
The obvious injection 
$N(i+2,n)\hookrightarrow N(i,n)$ does not factor through $P_{[i+2,n-1]}$ 
as the image of $N(i+2,n)$ in $P_{[i+2,n-1]}$ covers the socle of $P_{[i+2,n-1]}$
and any map from $P_{[i+2,n-1]}$ to $N(i,n)$ sends the socle of $P_{[i+2,n-1]}$ to zero.
This implies that the second self-extension  of $N(i,n)$ does not vanish.

\subsection{Proof of Proposition~\ref{prop3} for modules $S(i,j)$}\label{s4.5}

Similarly to Subsection~\ref{s4.2}, we may assume $n=j$ without losing any generality.
Note that $i<n-1$ as $S(n-1,n)\in\mathbf{D}$.

From the definition of $S(i,n)$, it follows that there is a surjection $P_{[i,n-1]}\tto S(i,n)$
and it is easy to see that the kernel of this surjection is isomorphic to $M(1,n-1)$, if $i=1$,
and to $S(i-1,n-1)$, if $i>1$.

Consider first the case $i=1$ and $n=4$. There is a surjection $P(2)\tto M(1,3)$ 
whose kernel is isomorphic to $L(2)$. The obvious injection $L(2)\hookrightarrow S(1,4)$
does not factor through $P(2)$ as the image of $L(2)$ in $P(2)$ is in the socle and
any map from $P(2)$ to $S(1,4)$ kills the socle of $P(2)$.
This implies that the second self-extension  of $S(1,4)$ does not vanish.

Consider now the case $i=1$ and $n>4$. There is a surjection $P_{[2,n-2]}\tto M(1,n-1)$ 
with kernel isomorphic to $M(2,n-2)$. The obvious injection $M(2,n-2)\hookrightarrow S(1,n)$
does not factor through $P_{[2,n-2]}$ as the image of $M(2,n-2)$ in $P_{[2,n-2]}$ intersects the socle and
any map from $P_{[2,n-2]}$ to $S(i,n)$ kills the socle of $P_{[2,n-2]}$.
This implies that the second self-extension  of $S(1,n)$ does not vanish.

Consider now the case $i=2$. In this case there is a surjection $P_{[1,n-2]}\tto S(1,n-1)$ 
whose kernel is isomorphic to $M(1,n-2)$. The unique up to a scalar non-zero homomorphism
$M(1,n-2)\to S(2,n)$ does not factor through $P_{[1,n-2]}$ as the image of 
$M(1,n-2)$ in $P_{[1,n-2]}$ intersects the socle and
any map from $P_{[1,n-2]}$ to $S(2,n)$ kills the socle of $P_{[1,n-2]}$.
This implies that the second self-extension  of $S(2,n)$ does not vanish.

Finally, consider the case $i>2$. In this case, by recursion, 
there is a surjection $P_{[i-1,n-2]}\tto S(i-1,n-1)$ 
whose kernel is isomorphic to $S(i-2,n-2)$. The unique up to a scalar non-zero homomorphism
$S(i-2,n-2)\to S(i,n)$ does not factor through $P_{[i-1,n-2]}$ as the image of 
$S(i-2,n-2)$ in $P_{[i-1,n-2]}$ intersects the socle and
any map from $P_{[i-1,n-2]}$ to $S(i,n)$ kills the socle of $P_{[i-1,n-2]}$.
This implies that the second self-extension  of $S(i,n)$ does not vanish.

\subsection{Proof of Proposition~\ref{prop3} for simple modules $L(i)$}
\label{s4.6}
Similarly to Subsection~\ref{s4.2} we can assume that $i=n$ without losing any generality.
The kernel of the projective cover $P(n)\tto L(n)$ is
$L(n-1)$ and $L(n)$ appears in the top of the kernel $P(n-1)\tto L(n-1)$. 
This implies that the second self-extension  of $L(n)$ does not vanish.

\section{Generalized tilting modules for $B_n$}\label{s5}

\subsection{Generalized tilting modules}\label{s5.1}

Let $\Lambda$ be a finite-dimensional algebra and $T$ a $\Lambda$-module. Recall from \cite{Mi}
that $T$ is called a {\em generalized tilting} $\Lambda$-module provided that 
\begin{itemize}
\item $T$ has finite projective dimension;
\item $\mathrm{Ext}^{i}_{\Lambda}(T,T)=0$, for all $i>0$;
\item the module $\Lambda_{\Lambda}$ has a finite coresolution by modules in $\mathrm{add}(T)$.
\end{itemize}

We will say that a $\Lambda$-module is {\em basic} provided that it is 
isomorphic to a direct sum of pairwise non-isomorphic indecomposable modules.

\subsection{Classification of basic generalized tilting $B_n$-modules}\label{s5.2}

\begin{theorem}\label{thm4}

Basic generalized tilting $B_n$-modules are, up to isomorphism, exactly the following modules:
\begin{equation}\label{eq1}
P(1)\oplus P(2)\oplus\dots\oplus P(n-1)\oplus X,
\end{equation}
where $X\in\mathbf{D}$.
\end{theorem}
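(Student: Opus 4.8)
The plan is to split the argument into two directions: first, verify that each module in~\eqref{eq1} is a basic generalized tilting module; second, show there are no others. For the first direction, write $P = P(1)\oplus\dots\oplus P(n-1)$ and $T = P\oplus X$ with $X\in\mathbf{D}$. Since $P$ is projective and $X$ is a standard or costandard module, each $X\in\mathbf{D}$ has finite projective dimension (standard modules over a quasi-hereditary algebra always do, and costandard modules do for $B_n$ because $B_n$ has finite global dimension, being quasi-hereditary with all $\Delta(i)$ of finite projective dimension), so $T$ has finite projective dimension. The self-orthogonality $\mathrm{Ext}^k_{B_n}(T,T)=0$ for $k>0$ needs: $\mathrm{Ext}^k(P,-)=0$ automatically; $\mathrm{Ext}^k(-,P)=0$ because $P = T(i)$ for $i>1$ together with $T(1)=L(1)$ — wait, here one uses instead that $P(i)=I(i)$ is injective for $i<n$, so $\mathrm{Ext}^k(-,P)=0$; and $\mathrm{Ext}^k(X,X)=0$ is exactly the last claim of Proposition~\ref{cor1}. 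Finally, for the coresolution of $B_n$ in $\mathrm{add}(T)$: the indecomposable projectives $P(1),\dots,P(n-1)$ lie in $\mathrm{add}(T)$ already, so one only needs a finite $\mathrm{add}(T)$-coresolution of $P(n)$; using the short exact sequences relating $P(n)$ to $\Delta(n),\Delta(n-1),\dots$ or to $\nabla(n)$ one builds such a coresolution explicitly, with the tail lying in $\mathrm{add}(P)$ together with a copy of $X$. This is the most computational part of the forward direction but is routine given the explicit module structure from Subsection~\ref{s2.4}.

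For the converse — that these are the only basic generalized tilting modules — I would argue as follows. A basic generalized tilting module $T$ over an algebra of finite global dimension has exactly $n$ indecomposable summands (the number of simples), by the standard theory of \cite{Mi}. Each summand $Y$ of $T$ satisfies $\mathrm{Ext}^k_{B_n}(Y,Y)=0$ for all $k>0$; in particular $\mathrm{Ext}^2_{B_n}(Y,Y)=0$. By Proposition~\ref{prop3}, the only indecomposable $B_n$-modules with vanishing second self-extension are the projectives $P(1),\dots,P(n)$ and the modules in $\mathbf{D}$. So every indecomposable summand of $T$ lies in $\{P(1),\dots,P(n)\}\cup\mathbf{D}$. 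Next one rules out $P(n)$ as a summand: if $P(n)\mid T$, then since $P(n)$ is not injective it is not projective-injective, and one checks $\mathrm{Ext}$-compatibility forces the remaining summands, but more directly $P(n)$ has infinite-free... — rather, the cleanest route is that a generalized tilting module containing $P(n)$ would, after the standard reduction, force $T$ to be projective (the regular module), contradicting $|T|=n$ with one non-injective projective summand; alternatively $B_n$ has $\mathrm{Ext}^{>0}(P(n),-)=0$ but the coresolution condition then forces $P(n)\in\mathrm{add}(T)$ to already resolve everything, which it cannot. So $T$ is a sum of $n$ modules drawn from $\{P(1),\dots,P(n-1)\}\cup\mathbf{D}$.

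It remains to pin down which such sums are tilting. The key constraints are self-orthogonality across distinct summands and the coresolution condition. I expect the main obstacle to be showing that $T$ must contain \emph{all} of $P(1),\dots,P(n-1)$ and exactly one element of $\mathbf{D}$, rather than, say, two elements of $\mathbf{D}$ and fewer projectives. For this, I would use a counting/support argument: the classes $[\Delta(i)]$ and $[\nabla(i)]$ in the Grothendieck group, together with $[P(1)],\dots,[P(n-1)]$, and the requirement that the classes of the $n$ summands of a tilting module form a $\mathbb{Z}$-basis of $K_0(B_n)$ compatible with the coresolution of $[B_n]=\sum[P(i)]$. Since $[P(1)],\dots,[P(n-1)]$ are part of such a basis and $[P(n)] = [\nabla(n)] + (\text{costandard filtration terms}) $, one shows the $n$-th summand must be a single $X$ whose class completes the basis, forcing $X\in\mathbf{D}$ and all other summands to be $P(1),\dots,P(n-1)$. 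Checking that every such choice of $X$ does give a valid coresolution closes the loop, and this reduces to the explicit short exact sequences among $P(n)$, the $\Delta$'s and the $\nabla$'s already available. I would present the Grothendieck-group linear independence of $\{[P(1)],\dots,[P(n-1)],[X]\}$ for each $X\in\mathbf{D}$ as the one genuinely new computation needed here.
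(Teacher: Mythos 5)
Your forward direction is essentially the paper's argument and is fine: finite projective dimension from finite global dimension, the cross-vanishing from projectivity and injectivity of $P(1),\dots,P(n-1)$, $\mathrm{Ext}^{>0}_{B_n}(X,X)=0$ from Proposition~\ref{cor1}, and an $\mathrm{add}$-coresolution of $P(n)$ obtained by splicing the sequences $0\to\Delta(i+1)\to P(i)\to\Delta(i)\to 0$ and $0\to\nabla(i)\to P(i)\to\nabla(i+1)\to 0$ (this splice is exactly the paper's Lemma~\ref{lem5} and should be written out, but it is routine, as you say). The converse, however, has a genuine gap. First, your paragraph ruling out $P(n)$ as a summand aims at a false statement: $\Delta(n)=P(n)$ belongs to $\mathbf{D}$, so the regular module $P(1)\oplus\dots\oplus P(n)$ is itself one of the modules \eqref{eq1} and is of course a generalized tilting module; the sketched contradiction (``$T$ would be the regular module, contradicting $|T|=n$\dots'') cannot be completed and indicates a misreading of $\mathbf{D}$.

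Second, and more seriously, the step that carries the whole classification --- that $T$ must contain \emph{all} of $P(1),\dots,P(n-1)$, leaving exactly one slot for a module of $\mathbf{D}$ --- is delegated to a Grothendieck-group argument that does not work. In $K_0(B_n)$ one has $[\Delta(i)]=[\nabla(i)]$, so linear independence of classes cannot distinguish standard from costandard summands, and a configuration such as $P(1)\oplus\dots\oplus P(n-2)\oplus\Delta(k)\oplus\nabla(m)$ with $k\neq m$ typically does consist of $n$ modules with linearly independent classes; moreover your phrase ``since $[P(1)],\dots,[P(n-1)]$ are part of such a basis'' presupposes exactly what has to be proved. The paper's route is the standard fact that every indecomposable projective-injective module is a direct summand of any generalized tilting module: in the finite $\mathrm{add}(T)$-coresolution of the regular module, the injective summand $P(i)$, $i<n$, splits off the degree-zero term, hence lies in $\mathrm{add}(T)$. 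Combined with the Miyashita count of $n$ summands (which you do use) this forces $n-1$ summands to be $P(1),\dots,P(n-1)$, and then Proposition~\ref{prop3} together with basicness puts the last summand in $\mathbf{D}$. Without this fact --- or a substitute consisting of genuine $\mathrm{Ext}$ computations in the spirit of Lemma~\ref{lem9} that exclude the mixed configurations above --- your converse is incomplete.
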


\begin{proof}
Let $T$ be a generalized tilting $B_n$-module.
Corollary 1 in \cite{Mi} states that the number of isomorphism classes of simple modules in $B_n$ and $\mathrm{End}_{B_n}(T)$ are equal. It follows that the number of indecomposable summands of $T$ must be $n$.
From this it also follows that every projective-injective module must be a direct summand of any generalized tilting module. 
As $P(i)$ is both projective and injective for all $i<n$, 
we have that each such $P(i)$ is a summand of $T$.
Therefore $T$ is isomorphic to a module of the form \eqref{eq1}, for some $B_n$-module $X$.
As $T$ must be ext-self-orthogonal, from Proposition~\ref{prop3} it follows that 
$X\in\mathbf{D}$.

It remains to check that, for each $X\in\mathbf{D}$, the module \eqref{eq1} is a generalized
tilting module. As $B_n$ is quasi-hereditary, it has finite global dimension, so the projective
dimension of any module is finite. Further, from Proposition~\ref{cor1} and the fact that 
each $P(i)$, for $i<n$, is both projective and injective, it follows that the module
\eqref{eq1} is ext-self-orthogonal. What is left is to show that the module \eqref{eq1}
coresolves the right regular $B_n$-module. If $X$ is projective, the module \eqref{eq1} is equal to the right regular $B_n$-module. 
Note that to show that a module coresolves the right regular $B_n$-module it is enough to show that it coresolves every projective $B_n$-module. If $X$ is not projective, all projective $B_n$-modules except $P(n)$ is a direct summand of the module \eqref{eq1}. It therefore only remains to show that the module \eqref{eq1} coresolves $P(n)$.
The claim follows from Lemma~\ref{lem5} below 
since all projective-injective $B_n$-modules are direct summands of the module \eqref{eq1}.

\begin{lemma}\label{lem5}
Let $X\in\mathbf{D}$ be non-projective, then $X$ has a projective resolution of the form
\begin{displaymath}
0\to Q_k\to\dots \to Q_1\to Q_0\to X\to 0 
\end{displaymath}
where $Q_k\cong P(n)$ and all other $Q_i$ are projective-injective.
\end{lemma}

\begin{proof}
Write the modules in $\mathbf{D}\setminus\{\Delta(n)\}$ in the following order:
\begin{displaymath}
\Delta(n-1),\Delta(n-2),\dots,\Delta(1)=\nabla(1),\nabla(2),\dots,\nabla(n)
\end{displaymath}
and let us prove the claim by induction with respect to this order.

The basis of the induction follows from the short exact sequence
\begin{displaymath}
0\to P(n)\to P(n-1)\to\Delta(n-1)\to 0.
\end{displaymath}
The induction steps follow from the short exact sequences
\begin{displaymath}
0\to \Delta(i+1)\to P(i)\to\Delta(i)\to 0, \qquad i<n,
\end{displaymath}
and
\begin{displaymath}
0\to \nabla(i)\to P(i)\to\nabla(i+1)\to 0, \qquad i<n.
\end{displaymath}
\end{proof}

This completes the proof.
\end{proof}

\subsection{A partial order of generalized tilting $B_n$-modules}
\label{s5.3}
Let $\Lambda$ be an Artin algebra and let $\mathcal{T}_{\Lambda}$ denote the set of all basic generalized tilting $\Lambda$-modules up to isomorphism and define the \emph{right perpendicular category} of a $\Lambda$-module $T$ to be 
$$T^{\perp}:=\{X\in \Lambda\text{-mod} \; |\; \mathrm{Ext}_{\Lambda}^k(T,X)=0 \text{ for all } k>0\}.$$
We can now define a partial order $\leq$ on $\mathcal{T}_{\Lambda}$ by letting $T\leq T'$ if $T^{\perp}\subset T'^{\perp}$.  
In \cite{HU} the authors prove that, for any Artin algebra $\Lambda$, the Hasse diagram for $(\mathcal{T}_{\Lambda},\leq)$ is equal to the graph $\mathit{K}_{\Lambda}$ which is defined as follows. 
Let the vertices of $\mathit{K}_{\Lambda}$ be the elements of $\mathcal{T}_{\Lambda}$. There is an edge between $T$ and $T'$ if and only if $T=M\oplus X$, $T'=M\oplus Y$, where $X,Y$ is indecomposable and there exists a short exact sequence $$0\rightarrow X \rightarrow \tilde{M} \rightarrow Y \rightarrow 0$$ where $\tilde{M}\in \mathrm{add}(M)$.

By Theorem~\ref{thm4} every basic (generalized) tilting module is of the form $$P(1)\oplus P(2)\oplus \dots \oplus P(n-1)\oplus X,$$ for some $X\in \mathbf{D}$. So the set $\mathbf{D}$ is in bijection with the set $\mathcal{T}_{B_n}$. $\mathbf{D}$ therefore inherits a partial order from $\mathcal{T}_{B_n}$, which we will denote by $\prec$. From the short exact sequences
\begin{displaymath}
0\to \Delta(i+1)\to P(i)\to\Delta(i)\to 0, \qquad i<n,
\end{displaymath}
and
\begin{displaymath}
0\to \nabla(i)\to P(i)\to\nabla(i+1)\to 0, \qquad i<n.
\end{displaymath}
we then get the partial order

\begin{displaymath}
\nabla(n)\prec \dots \prec \nabla(2)\prec L(1) \prec \Delta(2)\prec \dots \prec \Delta(n).
\end{displaymath}

\section{Exceptional sequences for $B_n$}\label{s6}

\subsection{Exceptional sequences}\label{s6.1}

Let $\Lambda$ be a finite-dimensional algebra. Recall, see \cite{Bo},
that a $\Lambda$-module $M$ is called  \emph{exceptional} provided that
\begin{itemize}
\item $\mathrm{Hom}_{\Lambda}(M,M)=\Bbbk$;
\item $\mathrm{Ext}^i_{\Lambda}(M,M)=0$, for all $i>0$.
\end{itemize}
A sequence $\mathbf{M}=(M_1,\dots, M_k)$ of $\Lambda$-modules is called
an \emph{exceptional sequence} provided that 
\begin{itemize}
\item each $M_i$ is exceptional;
\item $\mathrm{Ext}^i_{\Lambda}(M_t,M_s)=0$, for all $1\leq s< t\leq k$ and all $i\geq 0$.
\end{itemize}
An exceptional sequence is called \emph{full (or complete)} if it generates the derived category.

\subsection{Classification of exceptional sequences for $B_n$}\label{s6.2}

\begin{proposition}\label{prop7}
A $B_n$-module $M$ is exceptional if and only if $M\in\mathbf{D}$.
\end{proposition}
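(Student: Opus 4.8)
The plan is to prove the two directions separately. For the ``if'' direction, suppose $M\in\mathbf{D}$. Each $M$ is either $L(1)$, a standard module $\Delta(i)$, or a costandard module $\nabla(i)$; these are all indecomposable (they are string modules or simple) and hence have local endomorphism ring. Since $\Bbbk$ is algebraically closed, to conclude $\mathrm{Hom}_{B_n}(M,M)=\Bbbk$ it suffices to check that no such $M$ admits a nilpotent endomorphism, which is immediate from the explicit ``thin'' shape of the modules in $\mathbf{D}$ (each composition factor $L(k)$ occurs with multiplicity at most one in $\Delta(i)=S(i-1,i)$ and $\nabla(i)=N(i-1,i)$). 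The higher self-extension vanishing $\mathrm{Ext}^k_{B_n}(M,M)=0$ for $k>0$ is exactly the last assertion of Proposition~\ref{cor1}. Hence every $M\in\mathbf{D}$ is exceptional.

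For the ``only if'' direction, suppose $M$ is an exceptional $B_n$-module. Since $\mathrm{Hom}_{B_n}(M,M)=\Bbbk$ forces $M$ to be indecomposable, I would invoke the classification of indecomposable $B_n$-modules from Subsection~\ref{s2.4}: $M$ is either projective-injective, simple, or one of the string modules $M(i,j)$, $N(i,j)$, $W(i,j)$, $S(i,j)$. A projective-injective module $P(i)$ with $i<n$ has $L(i)$ in both its top and its socle, hence a nonzero non-invertible endomorphism, so it is not exceptional. For the remaining non-projective indecomposables, Proposition~\ref{prop3} says that any indecomposable $B_n$-module which is neither projective nor in $\mathbf{D}$ satisfies $\mathrm{Ext}^2_{B_n}(M,M)\neq 0$, contradicting exceptionality. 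Therefore $M$ must lie in $\mathbf{D}$, and one checks (as in the first direction) that the elements of $\mathbf{D}$ are indeed exceptional, which we have already done. Combining, $M$ is exceptional iff $M\in\mathbf{D}$.

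The only genuine work beyond citing Propositions~\ref{cor1} and~\ref{prop3} is the bookkeeping in the ``only if'' direction: ruling out the projective-injective indecomposables $P(i)$, $i<n$ (they are excluded because they are self-extending in degree zero, i.e. have non-scalar endomorphisms, rather than in positive degree), and confirming that the ``boundary'' string modules $\Delta(i)=S(i-1,i)$ and $\nabla(i)=N(i-1,i)$ are the ones that escape Proposition~\ref{prop3}. This matches the hypothesis ``$j>i+1$'' in the discussion following Proposition~\ref{prop3}: the modules $S(i-1,i)$ and $N(i-1,i)$ are precisely the $j=i+1$ cases, which are exactly the members of $\mathbf{D}$. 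I expect no serious obstacle here — the main point is simply to assemble the classification of indecomposables together with the two extension computations already in hand.
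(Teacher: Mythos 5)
Your argument is correct and follows essentially the same route as the paper's proof: exclude the projective--injective modules $P(1),\dots,P(n-1)$ via the nilpotent top-to-socle endomorphism, exclude all other indecomposables outside $\mathbf{D}$ via Proposition~\ref{prop3} (noting $P(n)=\Delta(n)\in\mathbf{D}$), and verify exceptionality of the members of $\mathbf{D}$ using their scalar endomorphism rings together with Proposition~\ref{cor1}. Your extra justification that multiplicity-free indecomposables have trivial endomorphism algebra is a harmless elaboration of what the paper simply asserts.
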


\begin{proof}
Each of the modules $P(1),P(2),\dots,P(n-1)$ has a nilpotent endomorphism given by sending the
top to the socle. Therefore none of these modules is exceptional. If a $B_n$-module $M$ is
neither projective-injective nor in $\mathbf{D}$, then $M$ is not exceptional by 
Proposition~\ref{prop3}.

On the other hand, every $M\in\mathbf{D}$ has trivial endomorphism algebra and hence is exceptional
by Proposition~\ref{cor1}. The claim follows.
\end{proof}

\begin{theorem}\label{thm8}
There are exactly $2^{n-1}$ full exceptional sequences of $B_n$-modules and they are all of the the form
\begin{equation}\label{eq2}
(\nabla(i_1),\dots, \nabla(i_k),L(1),\Delta(j_1),\dots,\Delta(j_l)), 
\end{equation}
where
\begin{itemize}
\item
$k+l=n-1$, where $0\leq k,l \leq n-1$;
\item
$\{i_1,\dots,i_k,j_1,\dots, j_l\}=\{2,\dots,n\}$;
\item 
$i_s > i_t$ and $j_s < j_t$ for $s<t$.
\end{itemize}  
\end{theorem}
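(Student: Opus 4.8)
The strategy is to reduce the classification of full exceptional sequences to a purely combinatorial statement about orderings of the set $\mathbf{D}$, using the extension vanishing from Proposition~\ref{cor1} and Proposition~\ref{prop3}. By Proposition~\ref{prop7}, the entries of any exceptional sequence must all lie in $\mathbf{D}$, so the first task is to bound the length: a full exceptional sequence generates $D^b(B_n)$, and since $B_n$ has $n$ simple modules, the Grothendieck group has rank $n$, forcing any full exceptional sequence to have exactly $n$ terms. The key observation is that $L(1)=\Delta(1)=\nabla(1)$ can appear at most once, and for each $i\geq 2$ the pair $\{\Delta(i),\nabla(i)\}$ contributes at most one entry — because $\mathrm{Ext}^1_{B_n}(\Delta(i),\nabla(i))=0$ but $\mathrm{Ext}^1_{B_n}(\nabla(i),\Delta(i))\neq 0$ (this is the content of the short exact sequence $0\to L(i-1)\to \Delta(i)\to L(i)\to 0$ giving a nonsplit extension, equivalently the self-extension computations behind Proposition~\ref{prop3}), so $\Delta(i)$ and $\nabla(i)$ cannot both occur in an exceptional sequence regardless of their relative order. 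Since $|\mathbf{D}|=2n-1$ and we must pick $n$ entries from $n$ such ``incompatible'' classes (the singleton $\{L(1)\}$ together with the $n-1$ pairs), each class contributes exactly one entry; in particular $L(1)$ always appears.

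\textbf{The ordering constraints.} Having fixed which module is chosen from each pair, I would next pin down the admissible orderings. The defining condition is $\mathrm{Ext}^i_{B_n}(M_t,M_s)=0$ for all $i\geq 0$ and all $s<t$; in particular $\mathrm{Hom}_{B_n}(M_t,M_s)=0$. Using the explicit structure of the standard and costandard modules (their Loewy layers are spelled out in Subsection~\ref{s2.4}), one computes the relevant $\mathrm{Hom}$ and $\mathrm{Ext}^1$ spaces between elements of $\mathbf{D}$. The upshot should be: among the chosen costandard modules, $\nabla(i)$ must precede $\nabla(j)$ whenever $i>j$ (the $\nabla$'s appear in \emph{decreasing} index order), and among the chosen standard modules $\Delta(j)$ must precede $\Delta(j')$ whenever $j<j'$ (the $\Delta$'s appear in \emph{increasing} index order), while $L(1)$ sits between the block of $\nabla$'s and the block of $\Delta$'s. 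Concretely: $\mathrm{Hom}(\nabla(j),\nabla(i))\neq 0$ for $i<j$ forces $\nabla(j)$ earlier than $\nabla(i)$; $\mathrm{Hom}(\Delta(j'),\Delta(j))\neq 0$ for $j<j'$ forces $\Delta(j')$ later; and one checks $\mathrm{Hom}$ and $\mathrm{Ext}^1$ from any $\Delta$ into $L(1)$ or any $\nabla$, and from $L(1)$ into any $\nabla$, vanish in the right direction but not the other, forcing the block structure. This yields exactly the shape \eqref{eq2} with the stated monotonicity conditions.

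\textbf{Fullness and the count.} It remains to verify that every sequence of the form \eqref{eq2} satisfying the three bullet conditions is genuinely a full exceptional sequence, not merely an exceptional one. Exceptionality of each term is Proposition~\ref{prop7}; the pairwise $\mathrm{Ext}$-vanishing in the correct direction follows from Proposition~\ref{cor1} (which handles $\mathrm{Ext}^k(\Delta_j,\Delta_i)$ for $i\leq j$, $\mathrm{Ext}^k(\nabla_i,\nabla_j)$ for $i\leq j$, and $\mathrm{Ext}^k(\Delta_i,\nabla_j)$ for all $i,j$) together with the explicit $\mathrm{Hom}$-computations; one must be slightly careful to check that, e.g., $\mathrm{Ext}^k(\Delta(j_s),\nabla(i_t))=0$ — which is exactly the third statement of Proposition~\ref{cor1} — and that $\mathrm{Ext}^k$ from a later $\Delta$ to $L(1)$ or an earlier $\nabla$ vanishes, using that $L(1)=\nabla(1)$ and the costandard-costandard vanishing. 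For fullness, I would argue that the classes $[\Delta(i)]$ (equivalently $[\nabla(i)]$, since $\Delta(i)$ and $\nabla(i)$ have the same class in $K_0$) for $i=1,\dots,n$ together with $[L(1)]$ span $K_0(B_n)\cong\mathbb{Z}^n$ — indeed the standard modules always form a $\mathbb{Z}$-basis of the Grothendieck group of a quasi-hereditary algebra — and an exceptional sequence of length equal to the rank whose classes span $K_0$ automatically generates the derived category (a standard fact, or one invokes that any exceptional sequence of maximal length in the derived category of a finite-dimensional algebra of finite global dimension is full). Finally, counting: there is one choice of subset — namely deciding, for each $i\in\{2,\dots,n\}$ independently, whether to take $\Delta(i)$ or $\nabla(i)$ — giving $2^{n-1}$ options, and the monotonicity conditions determine the ordering uniquely once the subset is fixed. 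Hence exactly $2^{n-1}$ full exceptional sequences.

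\textbf{Main obstacle.} The step I expect to require the most care is the precise determination of $\mathrm{Hom}$ and $\mathrm{Ext}^1$ spaces between the various standard and costandard modules — in particular establishing that the monotonicity conditions in the third bullet are \emph{forced} (not just sufficient). This amounts to a finite but somewhat delicate bookkeeping exercise with the string-module pictures from Subsection~\ref{s2.4}: one needs to know exactly which $\mathrm{Hom}(\Delta(j),\Delta(i))$, $\mathrm{Hom}(\nabla(j),\nabla(i))$, $\mathrm{Hom}(\Delta(i),\nabla(j))$, $\mathrm{Hom}(\nabla(i),\Delta(j))$, and the corresponding $\mathrm{Ext}^1$'s, are nonzero, and to extract from a single nonvanishing the obstruction to a given ordering. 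The rest of the argument is essentially formal once these computations are in hand.
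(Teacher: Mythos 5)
Your overall skeleton matches the paper's proof (restrict to $\mathbf{D}$ via Proposition~\ref{prop7}, exclude $\Delta(i)$ and $\nabla(i)$ from coexisting, force the ordering, get sufficiency from Proposition~\ref{cor1} plus Hom-vanishing, count $2^{n-1}$), but two of your key steps have genuine gaps and one supporting claim is false. First, your reason why $\Delta(i)$ and $\nabla(i)$ cannot both occur does not work: $\mathrm{Ext}^1_{B_n}(\nabla(i),\Delta(i))=0$ for all $i\geq 3$ (the syzygy of $\nabla(i)$ is $\nabla(i-1)$, and $\mathrm{Hom}(\nabla(i-1),\Delta(i))=0$), so the claimed asymmetry is wrong except for $i=2$; moreover, even a nonvanishing $\mathrm{Ext}^1$ in one direction would only forbid one of the two relative orders, since all positive-degree extensions in the other direction vanish by Proposition~\ref{cor1}. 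The correct and simple argument (the one in the paper) is that $\mathrm{Hom}(\Delta(i),\nabla(i))\neq 0$ and $\mathrm{Hom}(\nabla(i),\Delta(i))\neq 0$ (factoring through $L(i)$ and $L(i-1)$ respectively), which kills coexistence in either order. Second, the ordering cannot be forced by Hom and $\mathrm{Ext}^1$ bookkeeping, which is all your plan envisages: $\mathrm{Hom}(\nabla(j),\nabla(i))$ is nonzero only for $j=i+1$, $\mathrm{Hom}(\Delta(j'),\Delta(j))=0$ for $j<j'$ (your stated nonvanishing is backwards; the nonzero maps go $\Delta(i)\to\Delta(i+1)$), and the relevant $\mathrm{Ext}^1$'s between non-adjacent chosen indices also vanish. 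The actual obstruction sits in degree $j-i$: one needs $\mathrm{Ext}^{j-i}(\Delta(i),\Delta(j))\neq 0$ and $\mathrm{Ext}^{j-i}(\nabla(j),\nabla(i))\neq 0$, which is exactly Lemma~\ref{lem9} of the paper, proved by an explicit projective resolution; combined with $L(1)=\Delta(1)=\nabla(1)$ this also pins $L(1)$ between the $\nabla$-block and the $\Delta$-block. Without this lemma the uniqueness of the ordering (hence the count) is not established.

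Third, your fullness argument is not valid as stated: ``length equals the rank of $K_0$ and the classes span $K_0$, hence the sequence generates the derived category'' is not a standard fact, and spanning $K_0$ does not imply generation (the orthogonal complement could a priori have trivial Grothendieck group, as the (quasi-)phantom phenomena show); fullness of maximal-length exceptional sequences is a delicate question in general and cannot be cited away. The paper avoids this entirely with a short direct verification: by induction, $L(1)$ is in the sequence, and $L(i)$ is obtained from $L(i-1)$ as the cokernel of $L(i-1)\hookrightarrow\Delta(i)$ or the kernel of $\nabla(i)\twoheadrightarrow L(i-1)$, so every simple module, and hence the whole derived category, is generated. Your use of the rank of $K_0$ to bound the length from below (necessity of exactly $n$ terms, one from each of the $n$ classes) is fine; it is the sufficiency direction that needs the explicit generation argument.
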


\begin{proof}
By Proposition~\ref{prop7}, any exceptional sequence of $B_n$-modules consists of modules from $\mathbf{D}$.

For any $i\in\{2,3,\dots,n\}$, we have $\Delta(i)\not\cong\nabla(i)$ and there are non-zero maps
$\Delta(i)\to\nabla(i)$ and $\nabla(i)\to\Delta(i)$. Therefore $\Delta(i)$ and $\nabla(i)$ cannot
appear in the same exceptional sequence. Consequently, any full exceptional sequence must be
of the form \eqref{eq2}, up to ordering of elements in this sequence. That this ordering of elements yields an exceptional sequence follows by Proposition~\ref{cor1} and the fact that there are no non-zero maps from $\Delta(i)$ to $\nabla(j)$ for $i\neq j$; from $\Delta(j)$ to $\Delta(i)$ and from $\nabla(i)$ to $\nabla(j)$ for $i<j$. 
The fact that this ordering
is uniquely defined follows from the following lemma.

\begin{lemma}\label{lem9}
Let $1\leq i<j\leq n$ and $k=j-i$. Then
\begin{displaymath}
\mathrm{Ext}^k_{B_n}(\Delta(i),\Delta(j))\neq0\quad\text{ and }\quad
\mathrm{Ext}^k_{B_n}(\nabla(j),\nabla(i))\neq0.
\end{displaymath}
\end{lemma}

\begin{proof}
There is an exact sequence:
\begin{displaymath}
0\to \Delta(j)\to  P(j-1)\to\dots P(i+1)\to P(i)\to \Delta(i)\to 0.
\end{displaymath}
This is the first part of the projective resolution of $\Delta(i)$. Note that the only projective modules with non-zero homomorphisms to $\Delta(j)$ are $P(j)$ and $P(j-i)$.
Furthermore, the identity map on $\Delta(j)$ does not factor through $P(j-1)$ as the embedding 
from $\Delta(j)\hookrightarrow  P(j-1)$ ends up in the radical of $P(j-1)$.
This implies the inequality $H^k(\mathrm{Hom}_{B_n}(P_{\bullet},\Delta(j)))=\mathrm{Ext}^k_{B_n}(\Delta(i),\Delta(j))\neq0$,
and the second inequality $\mathrm{Ext}^k_{B_n}(\nabla(j),\nabla(i))\neq0$ follows using the simple preserving duality defined in Section~\ref{s2.3}.
\end{proof}

It remains to check that any sequence of the form \eqref{eq2} generates the derived category.
For this it is enough to show that all simple modules can be obtained from modules in 
\eqref{eq2} using the operations of taking kernels of epimorphisms and cokernels of
monomorphisms. Let us prove, by induction on $i$, that $L(i)$ can be obtained in this way.
The basis of the induction is trivial as $L(1)$ is in \eqref{eq2}. Now let us prove the
induction step. Assume that $L(i-1)$ can be obtained. Then \eqref{eq2} contains either 
$\Delta(i)$ or $\nabla(i)$. In the first case, $L(i)$ is the cokernel of the inclusion
$L(i-1)\hookrightarrow \Delta(i)$. In the second case, $L(i)$ is the kernel of the projection
$\nabla(i)\tto L(i-1)$. The claim of the theorem follows.
\end{proof}

\section{Generalized tilting modules and exceptional sequences for $C_n$}\label{s7}

\subsection{Self-orthogonal $C_n$-modules}\label{s7.1}

\begin{proposition}\label{prop21}
The only ext-self-orthogonal $C_n$-modules are the projective-injective modules,
and also the modules $N(i,i+1)$ and $S(i,i+1)$, where $i=1,2,\dots,n-1$.
\end{proposition}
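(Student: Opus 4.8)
The plan is to first establish that $C_n$ inherits the structure needed to run the same arguments as for $B_n$, and then to reduce the problem to a finite, tractable list of candidate modules. Since $C_n$ is by definition the quotient of $B_n$ by the relation that the length-two loop at vertex $\mathtt{1}$ is zero, its module category is a full subcategory of $B_n$-mod; by the classification in Subsection~\ref{s2.4}, the indecomposable $C_n$-modules are exactly the indecomposable $B_n$-modules other than $P(1)$, and one checks directly which strings survive the extra relation. The projective-injective modules of $C_n$ are self-orthogonal for the usual reasons (for $n=2$, $C_2$ is self-injective; for $n>2$ the projective-injectives among the $P(i)$ with $i\neq 1$ are injective as $B_n$-modules and remain so). So the content of the proposition is twofold: (i) the modules $N(i,i+1)=\nabla(i+1)$ and $S(i,i+1)=\Delta(i+1)$ are ext-self-orthogonal over $C_n$, and (ii) every other non-projective indecomposable has a non-vanishing self-extension.

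For (i), I would argue that $C_n$ is again quasi-hereditary with respect to the order $L(1)<L(2)<\cdots<L(n)$, with the same standard modules $\Delta(i)=S(i-1,i)$ for $i>1$ (these are the modules $S(i,i+1)$ and $\Delta(1)=\nabla(1)=L(1)$) and costandard modules $\nabla(i)=N(i-1,i)$, because the defining short exact sequences $\Delta(i+1)\hookrightarrow P(i)\tto\Delta(i)$ and $\nabla(i)\hookrightarrow I(i)\tto\nabla(i+1)$ still hold in $C_n$-mod for $1<i<n$, and now also $\Delta(n)=\nabla(n)=S(n-1,n)=L(n)$ together with the new projective $P(n)$ (which for $C_n$ has Loewy length two) give the top of the heredity chain. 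Granting this, Proposition~\ref{cor1} applies verbatim over $C_n$: for any $X$ lying in the set $\{\Delta(n),\dots,\Delta(1)=\nabla(1),\dots,\nabla(n)\}$ we get $\mathrm{Ext}^k_{C_n}(X,X)=0$ for all $k>0$. Since the $N(i,i+1)$ are exactly the $\nabla(i+1)$ and the $S(i,i+1)$ are exactly the $\Delta(i+1)$, this settles the self-orthogonality claim. One subtlety to address: for $C_n$ the algebra is \emph{not} quasi-hereditary only for that one order but the symmetric roles of vertices $\mathtt 1$ and $\mathtt n$ now make it quasi-hereditary for the reversed order as well, which is fine — we only need one such structure.

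For (ii), the strategy mirrors Section~\ref{s4}. Using Lemma~2.2 in \cite{KK} with the idempotent truncation $e=e_{j+1}+\dots+e_n$ (and symmetrically $e'=e_1+\dots+e_{i-1}$, which is now also legitimate since vertex $\mathtt 1$ behaves like vertex $\mathtt n$), one reduces any candidate $W(i,j),M(i,j),N(i,j),S(i,j)$ with $j>i+1$, and any simple $L(i)$ with $1<i<n$, to a boundary case and then exhibits an explicit non-split self-extension exactly as in Subsections~\ref{s4.2}–\ref{s4.6} — in fact the $B_n$-computations there already apply, because $\mathrm{Ext}^2_{B_n}(X,X)$ embeds into $\mathrm{Ext}^2_{C_n}(X,X)$ is \emph{false} in general, so instead one re-runs the minimal projective resolutions over $C_n$: the only place the $B_n$ and $C_n$ resolutions differ is near vertex $\mathtt 1$, where $P(1)$ is replaced by a shorter projective, and this changes nothing in the "the obvious map does not factor through the projective cover because it hits the socle" argument. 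The main obstacle, and the step I expect to require the most care, is precisely this bookkeeping: because $C_n$ has \emph{two} short projectives ($P(1)$ of Loewy length two at one end and $P(n)$ at the other) rather than one, the recursive "peel off a projective" computations from Section~\ref{s4} bifurcate, and for small $n$ (notably $n=2,3$) several of the string families are empty or degenerate, so the base cases of each recursion must be checked separately. Once those boundary cases are disposed of, combining (i) and (ii) with the fact that $L(1)=\Delta(1)$ and $L(n)=\Delta(n)=\nabla(n)$ are already on the allowed list, and that $P(1)$ (the one lost projective) is irrelevant since it is not a $C_n$-module, yields the proposition.
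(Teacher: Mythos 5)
Your part (i) rests on a false structural claim, and this is where the proof breaks: $C_n$ is \emph{not} quasi-hereditary for the order $L(1)<\dots<L(n)$ with the standard and costandard modules you list, so Proposition~\ref{cor1} cannot be invoked. Passing from $B_n$ to $C_n$ changes $P(1)$, not $P(n)$ (your ``new projective $P(n)$'' and the identity ``$S(n-1,n)=L(n)$'' are both incorrect): over $C_n$ the projective $P(1)$ is two-dimensional with radical $L(2)$, so the kernel of $P(1)\tto L(1)$ is $L(2)$, which has no filtration by the claimed standard modules -- the heredity chain breaks at the bottom, not at the top. In fact $C_n$ is not quasi-hereditary for \emph{any} order (so the remark about the reversed order is also wrong): every simple $C_n$-module has a non-vanishing second self-extension (for instance $\Omega L(1)=L(2)$ and $\Omega^2L(1)=\mathrm{rad}\,P(2)$ has $L(1)$ in its top; the paper verifies this for all simples), whereas for a quasi-hereditary algebra the minimal simple coincides with the minimal standard module and has no higher self-extensions. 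Consequently the ext-self-orthogonality of $N(i,i+1)$ and $S(i,i+1)$ must be proved directly; the paper does this by writing the explicit $C_n$-projective resolution $0\to N(n-1,n)\to P(n-1)\to\dots\to P(i+1)\to N(i,i+1)\to 0$ (note $N(n-1,n)$ and $S(1,2)$ \emph{are} projective over $C_n$), observing that only the degree-zero term admits a non-zero map to $N(i,i+1)$, and transporting the result to the $S$-family via the quiver automorphism $k\mapsto n+1-k$.

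There is a second decisive error in your final accounting: you place $L(1)$ and $L(n)$ ``on the allowed list'' as $\Delta(1)$ and $\Delta(n)=\nabla(n)$. The proposition's list contains no simple modules, and indeed $L(1)$ and $L(n)$ are not ext-self-orthogonal over $C_n$ ($\mathrm{Ext}^2_{C_n}(L(1),L(1))\neq 0$ by the computation above, and symmetrically for $L(n)$); if your argument were correct, it would produce a strictly larger list than the one to be proved. Your part (ii) is in the right spirit and close to the paper's strategy -- re-run the Section~\ref{s4} resolutions over $C_n$, noting that for $i\geq 4$ the first terms agree with the $B_n$-resolutions so Proposition~\ref{prop3} applies, use the symmetry for $j\leq n-3$, and then treat the finitely many boundary cases -- but you carry out none of the boundary cases, so as written that half remains only a plan.
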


\begin{proof}
That the projective-injective modules are ext-self-orthogonal is clear, this also applies to
 the projective modules $N(n-1,n)$ and $S(1,2)$ and the
injective modules $N(1,2)$ and $S(n-1,n)$.

For $i<n-1$, the module $N(i,i+1)$ has the following projective resolution:
\begin{displaymath}
0\to N(n-1,n)\to P(n-1)\to\dots \to P(i+2)\to P(i+1)\to N(i,i+1)\to 0. 
\end{displaymath}
The only term in this resolution that has a non-zero homomorphism to
$N(i,i+1)$ is the zero term $P(i+1)$. This implies that $$H^k(\mathrm{Hom}_{C_n}(P_{\bullet},N(i,i+1)))=\mathrm{Ext}^k_{C_n}(N(i,i+1),N(i,i+1))=0$$ for all $k>0$. Therefore $N(i,i+1)$ is an
ext-self-orthogonal module.

The algebra $C_n$ has the obvious automorphism $\alpha$ that swaps $1$ and $n$,
$2$ and $n-1$ and so on. Twisting by $\alpha$ swaps modules
of type $S$ and modules over type $N$. Therefore ext-self-orthogonality
of $S(i,i+1)$ follows from ext-self-orthogonality of $N(n-i,n+1-i)$
by applying $\alpha$.

It remains to show that all other $C_n$-modules are not ext-self-orthogonal.
To start with, let us show that all simple $C_n$-modules are not 
ext-self-orthogonal. If $i\neq 1,n$, then the kernel of the projective 
cover $P(i)\tto L(i)$ is $W(i-1,i+1)$. The latter 
has projective cover  $P(i-1)\oplus P(i+1)\tto W(i-1,i+1)$ whose
kernel has $L(i)$ at the top. Therefore $\mathrm{Ext}^2_{C_n}(L(i),L(i))\neq 0$.
Similarly, the kernel of the projective cover $P(1)\tto L(1)$ is
$L(2)$ and $L(1)$ appears in the top of the kernel $P(2)\tto L(2)$.
Therefore $\mathrm{Ext}^2_{C_n}(L(1),L(1))\neq 0$. Applying 
$\alpha$, we get $\mathrm{Ext}^2_{C_n}(L(n),L(n))\neq 0$.
So, all $C_n$-simples are not ext-self-orthogonal.

Now, for $X\in\{W,M,S,N\}$, $1\leq i$ and $i+1<j\leq n$, consider the module $X(i,j)$.
If $i\geq 4$, then the first two terms of the projective resolution of
$X(i,j)$ are exactly the same modules as for $B_n$ and therefore the fact that 
$X(i,j)$ is not ext-self-orthogonal follows from Proposition~\ref{prop3}.
Applying $\alpha$, we obtain that $X(i,j)$ is not ext-self-ortho\-go\-nal
if $j\leq n-3$. So, it remains to consider the nine modules $X(i,j)$, where
$i=1,2,3$ and $j=n-2,n-1,n$. Taking into account the parity of $n$,
we have altogether $18$ different cases left.

Consider the case $n$ is odd and $X(i,j)=W(1,n)$. Then the kernel of
the projective cover $P_{[1,n]}\tto W(1,n)$ is isomorphic to 
$W(2,n-1)$, if $n>3$, and $L(2)$, if $n=3$. 
The kernels of the projective covers $P_{[2,n-1]}\tto W(2,n-1)$ and $P_2\tto L(2)$
are isomorphic to $W(1,n)$ in both cases. The identity homomorphism of $W(1,n)$
does not factor through $P_{[2,n-1]}$ and hence defines a
non-zero element in $\mathrm{Ext}^2_{C_n}(W(1,n),W(1,n))$.
Therefore $W(1,n)$ is not ext-self-orthogonal.

Consider the case $n$ is even and $X(i,j)=S(1,n)$. Then the kernel of
the projective cover $P_{[1,2]}\tto S(1,n)$ is isomorphic to 
$S(2,n-1)$. The kernel of the projective cover 
$P_{[2,n-2]}\tto S(2,n-1)$
is isomorphic to $S(1,n-2)$. The obvious inclusion of $S(1,n-2)$
to $S(1,n)$ does not factor through $P_{[2,n-2]}$ and hence defines a
non-zero element in $\mathrm{Ext}^2_{C_n}(S(1,n),S(1,n))$.
Therefore $S(1,n)$ is not ext-self-orthogonal.

The remaining cases are similar and left to the reader. The claim 
of the proposition follows.
\end{proof}

\subsection{Generalized tilting $C_n$-modules}\label{s7.2}

In this subsection we assume $n>2$ as the algebra $C_2$ is self-injective.

\begin{theorem}\label{thm22}
Basic generalized tilting $C_n$-modules are, up to isomorphism,
exactly the following modules:
\begin{equation}\label{eq3}
P(2)\oplus P(3)\oplus\dots\oplus P(n-1)\oplus S(i,i+1)\oplus N(j,j+1), 
\end{equation}
where $i,j\in\{1,2,\dots,n-1\}$.
\end{theorem}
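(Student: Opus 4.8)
The plan is to mimic the strategy of the proof of Theorem~\ref{thm4}, using Corollary~1 in \cite{Mi} to pin down the number of indecomposable summands and then using Proposition~\ref{prop21} to identify which indecomposables can occur, followed by a direct verification that the candidate modules really are generalized tilting modules.

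First I would argue that any basic generalized tilting $C_n$-module $T$ has exactly $n$ indecomposable summands, since $C_n$ and $\mathrm{End}_{C_n}(T)$ have the same number of simple modules by Corollary~1 in \cite{Mi}; moreover every projective-injective $C_n$-module must be a summand of $T$, because the right regular module is coresolved by $\mathrm{add}(T)$ and hence, dualizing, every injective (here projective-injective) appears in $T$. For $n>2$, the indecomposable projective-injective $C_n$-modules are exactly $P(2),P(3),\dots,P(n-1)$ (the modules $P(1)$ and $P(n)$ of $B_n$ are no longer present, and among the remaining indecomposables exactly the $P(i)$, $2\le i\le n-1$, are both projective and injective — this needs a short check of the Loewy structure using Subsection~\ref{s2.4}). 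So $n-2$ of the $n$ summands are forced to be these, and the remaining two summands $X,Y$ must be ext-self-orthogonal indecomposables, hence by Proposition~\ref{prop21} each of them is either projective-injective (already used, so excluded by the basic condition unless... no, excluded), or of the form $N(i,i+1)$ or $S(j,j+1)$. Then I would rule out the cases $X=N(i,i+1)$, $Y=N(i',i'+1)$ and $X=S(j,j+1)$, $Y=S(j',j'+1)$ with $i\neq i'$ (resp.\ $j\neq j'$): two distinct $N$'s (or two distinct $S$'s) cannot be ext-orthogonal \emph{to each other} — here one checks using the projective resolutions written in the proof of Proposition~\ref{prop21} that $\mathrm{Ext}^{k}_{C_n}(N(i,i+1),N(i',i'+1))\neq 0$ for a suitable $k=|i-i'|$, analogously to Lemma~\ref{lem9}. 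This leaves exactly the modules of the form \eqref{eq3}, with the two extra summands being one $S$ and one $N$.

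For the converse, I would fix $i,j$ and check the three defining properties of a generalized tilting module for the module $M$ in \eqref{eq3}. Finite projective dimension is immediate since $C_n$, being a quotient of the quasi-hereditary algebra that is dealt with via the explicit projective resolutions, has finite global dimension (alternatively, read it off the resolutions of $N$ and $S$ in Proposition~\ref{prop21}). For ext-self-orthogonality one combines: the $P(k)$, $2\le k\le n-1$, are projective-injective so contribute no extensions in either direction; $N(i,i+1)$ and $S(j,j+1)$ are individually ext-self-orthogonal by Proposition~\ref{prop21}; and the cross terms $\mathrm{Ext}^{>0}_{C_n}(N(i,i+1),S(j,j+1))$ and $\mathrm{Ext}^{>0}_{C_n}(S(j,j+1),N(i,i+1))$ vanish, which one sees from the explicit projective resolutions (the resolution of $N(i,i+1)$ ends in $N(n-1,n)=P(n-1)$ and otherwise consists of $P(k)$'s with $k>i$, and $\mathrm{Hom}_{C_n}(P(k),S(j,j+1))$ is controlled by the composition factors of $S(j,j+1)$, which are $L(j),L(j+1)$ only). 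Finally, for the coresolution of the right regular module by $\mathrm{add}(M)$, it suffices to coresolve each indecomposable projective, and the only ones not already summands of $M$ are $P(1)$ and $P(n)$; these are handled by the short exact sequences $0\to N(1,2)\to P(1)\to S(1,2)\to 0$-type sequences and their images under the automorphism $\alpha$ — more precisely one produces a finite $\mathrm{add}(M)$-coresolution of $P(1)$ and of $P(n)$ out of the short exact sequences $0\to N(k,k+1)\to P(k)\to N(k-1,k)\to 0$ and $0\to S(k,k+1)\to P(k)\to S(k+1,k+2)\to 0$ (with the conventions at the ends), which terminate at $S(i,i+1)$ or $N(j,j+1)$ after re-indexing, exactly as in Lemma~\ref{lem5}.

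The main obstacle I expect is the last point: producing, for \emph{every} choice of $i$ and $j$, a genuine finite $\mathrm{add}(M)$-coresolution of both $P(1)$ and $P(n)$ simultaneously, since the short exact sequences only move the ``$N$-index'' or ``$S$-index'' in one direction and one must check that starting from $P(1)$ (resp.\ $P(n)$) one can reach the chosen $N(j,j+1)$ (resp.\ $S(i,i+1)$) — equivalently, that $P(1)$ admits coresolutions terminating in $N(j,j+1)$ for \emph{any} $j$, and similarly on the other side. This is the analogue of Lemma~\ref{lem5} and is essentially a bookkeeping argument with the string-module short exact sequences, but it is the step where one has to be careful that no spurious non-projective-injective terms appear. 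A secondary, purely computational obstacle is the cross-vanishing $\mathrm{Ext}^{>0}_{C_n}(N(i,i+1),S(j,j+1))=0$, which in principle requires checking against the full projective resolution of $N(i,i+1)$; but since all terms of that resolution past the zeroth are projective-injective $P(k)$ with $k\ge i+1\ge 2$ and the homomorphism spaces to $S(j,j+1)$ are one-dimensional exactly when $k\in\{j,j+1\}$, one reduces to a finite, explicit check in the style of Lemma~\ref{lem9}.
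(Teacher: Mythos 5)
Your overall strategy coincides with the paper's: the count of summands via Corollary~1 of \cite{Mi} together with the forced projective-injective summands $P(2),\dots,P(n-1)$, Proposition~\ref{prop21} to restrict the two remaining summands, a Lemma~\ref{lem9}-style non-vanishing of $\mathrm{Ext}^{|i-i'|}$ between two modules of the same type ($N$ with $N$, or $S$ with $S$) to exclude them from one tilting module, and explicit projective resolutions for the converse. The cross-vanishing $\mathrm{Ext}^{>0}_{C_n}(N(j,j+1),S(i,i+1))=0$ that you defer to ``a finite explicit check in the style of Lemma~\ref{lem9}'' is precisely the truncation computation the paper performs (only the terms $P(i)$ and $P(i+1)$ of the resolution of $N(j,j+1)$ map non-trivially to $S(i,i+1)$, and at those two spots the Hom-complex is exact), so that part of your plan is sound, up to the typo $N(n-1,n)=P(n)$, not $P(n-1)$.

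The one step that is genuinely garbled is the coresolution of $P(1)$ and $P(n)$, and the ``main obstacle'' you flag there is an artifact of a swapped pairing. In $C_n$ one has $P(1)=S(1,2)$ and $P(n)=N(n-1,n)$, and the correct short exact sequences are $0\to N(k,k+1)\to P(k)\to N(k-1,k)\to 0$ and $0\to S(k,k+1)\to P(k+1)\to S(k+1,k+2)\to 0$; your sequence ``$0\to N(1,2)\to P(1)\to S(1,2)\to 0$'' cannot hold, since $P(1)$ is two-dimensional, and $S(k,k+1)$ embeds into $P(k+1)$, not $P(k)$. Splicing the correct sequences gives $0\to P(1)\to P(2)\to\dots\to P(i)\to S(i,i+1)\to 0$ and $0\to P(n)\to P(n-1)\to\dots\to P(j+1)\to N(j,j+1)\to 0$, i.e.\ $P(1)$ is coresolved ending in the chosen $S$-summand and $P(n)$ ending in the chosen $N$-summand, for \emph{every} $i$ and $j$, with all intermediate terms among the projective-injective summands of \eqref{eq3}. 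Your proposed orientation (reaching $N(j,j+1)$ from $P(1)$, and $S(i,i+1)$ from $P(n)$) would indeed get stuck, because all cosyzygies of $P(1)$ are of type $S$ and all cosyzygies of $P(n)$ are of type $N$; once the pairing is corrected, the difficulty you anticipate disappears and the argument closes exactly as in Lemma~\ref{lem5} and in the paper's proof.
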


\begin{proof}
As mentioned in the proof of Theorem \ref{thm4} the basic projective-injective module
$P(2)\oplus P(3)\oplus\dots\oplus P(n-1)$ must be a summand of any 
generalized tilting module and the total number of summands must be $n$. 
This leaves us with two undetermined summands which, by Proposition \ref{prop21}, must be of the form $N(i,i+1)$ or $S(i,i+1)$, where $i=1,2,\dots,n-1$.
For $1\leq s<t\leq n-1$, we have the
exact sequence
\begin{equation}\label{eq4}
0\to N(t,t+1)\to P(t)\to P(t-1)\to\dots\to P(s+1)\to N(s,s+1). 
\end{equation}
For $t=n-1$, this is the projective resolution of $N(s,s+1)$ since $N(n-1,n)=P(n)$. For $s<t<n$ the sequence \eqref{eq4} is the first part of the projective resolution of $N(s,s+1)$.
Note that the only projective modules with non-zero homomorphisms to $N(t,t+1)$ are $P(t)$ and $P(t+1)$.
Furthermore, the identity endomorphism on $N(t,t+1)$ does not factor through
$P(t)$ which implies that $H^{t-s}(\mathrm{Hom}_{C_n}(P_{\bullet},N(t,t+1)))\neq 0$ and hence gives a non-zero extension from $N(s,s+1)$
to $N(t,t+1)$ of degree $t-s$. Therefore two modules of type $N$
cannot be summands of the same generalized tilting module.

Applying the automorphism $\alpha$ to the sequence \eqref{eq4}, for $1\leq s<t\leq n-1$, we get the exact sequence
\begin{equation}\label{eq42}
0\to S(s,s+1)\to P(s+1)\to\dots\to P(t-1)\to P(t)\to S(t,t+1). 
\end{equation}
For $s=1$ this is the projective resolution of $S(t,t+1)$ since $S(1,2)=P(1)$.
We also obtain a non-zero extension from $S(t,t+1)$ to $S(s,s+1)$ of degree $t-s$ by applying $\alpha$ to the corresponding result for modules of type $N$. 
Hence two 
modules of type $S$
cannot be summands of the same generalized tilting module.
Therefore from Proposition~\ref{prop21} it follows that
any generalized tilting $C_n$-module is of the form \eqref{eq3}.

It remains to prove that every module of the form \eqref{eq3}
is a generalized tilting module. From \eqref{eq4} and \eqref{eq42} it follows that any module of
the form \eqref{eq3} has finite projective dimension and coresolves the regular $C_n$-module. 
More precisely, if the module \eqref{eq3} is projective, it is equal to the regular $C_n$-module. 
If the module \eqref{eq3} is not projective, all projective $C_n$-modules except $P(1)$ and $P(n)$ is a direct summand of the module \eqref{eq3}. That the module \eqref{eq3} coresolves these modules follows from the exact sequence \eqref{eq4} with $t=n-1, s=j$ and the exact sequence \eqref{eq42} with $s=1,t=i$.

So, it remains to check that any 
module of of the form \eqref{eq3} is ext-self-orthogonal, more
precisely, that, for $k>0$, we have 
\begin{equation}\label{eq5}
\mathrm{Ext}^k_{C_n}(S(i,i+1),N(j,j+1))=0=
\mathrm{Ext}^k_{C_n}(N(j,j+1),S(i,i+1)).
\end{equation}
Using $\alpha$, the first equality reduces to the second one.
Consider the projective resolution of $N(j,j+1)$ which is
given by \eqref{eq4}, for $s=j$ and $t=n-1$. If $i<j$, none of the terms in this
resolution has a non-zero homomorphism to $S(i,i+1)$ and hence all
extensions from  $N(j,j+1)$ to $S(i,i+1)$ vanish in this case.

If $i=j$, then only the term in homological degree zero of the resolution
has a non-zero homomorphism to $S(i,i+1)$ and hence all extensions of
non-zero homological degree vanish. If $i>j$, then only terms in homological
degrees $i-j$ and $i-j-1$ have non-zero homomorphisms to $S(i,i+1)$.
Consider the corresponding truncations of the projective resolutions.
The first truncation is \eqref{eq4} for $s=j$ and $t=i$. The unique 
non-zero homomorphism from $N(i,i+1)$ to $S(i,i+1)$ does factor through
$P(i)$ and hence the extension at this position vanishes. 
The second truncation is \eqref{eq4} for $s=j$ and $t=i-1$. In this
case there are no non-zero homomorphisms from $N(i-1,i)$ to $S(i,i+1)$
and hence there is no extension at this position either.
This prove \eqref{eq5} and completes the proof.
\end{proof}

\subsection{Exceptional sequences for $C_n$}\label{s7.3}

\begin{theorem}\label{thm23}
There are no full exceptional sequences of $C_n$-modules.
\end{theorem}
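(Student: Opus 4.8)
The plan is to show that any full exceptional sequence of $C_n$-modules would have to consist of $n$ pairwise ext-orthogonal exceptional modules, and then derive a contradiction from the classification in Proposition~\ref{prop21}. First I would observe that, just as for $B_n$, a full exceptional sequence must have length equal to the number of simple modules, namely $n$; this is because a full exceptional sequence generates the derived category and its length is a derived invariant equal to the rank of the Grothendieck group. Next I would identify which $C_n$-modules can appear in an exceptional sequence at all. An exceptional module is in particular ext-self-orthogonal, so by Proposition~\ref{prop21} it must be either a projective-injective module or one of the modules $N(i,i+1)$ or $S(i,i+1)$ for $i=1,\dots,n-1$. The projective-injective modules $P(2),\dots,P(n-1)$ each admit a nilpotent endomorphism sending top to socle, hence are not exceptional; the projective-injective modules here are exactly $P(2),\dots,P(n-1)$ since $N(n-1,n)=P(n)$ and $S(1,2)=P(1)$ are projective but not injective, and $N(1,2), S(n-1,n)$ are injective but not projective, so in fact all of $N(i,i+1)$ and $S(i,i+1)$ (including the four special ones) have trivial endomorphism ring and are genuinely exceptional, while $P(2),\dots,P(n-1)$ are not. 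So every exceptional $C_n$-module lies in the set $\{N(i,i+1), S(i,i+1) : 1\leq i\leq n-1\}$, which has only $2(n-1)$ elements.

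The heart of the argument is then a counting/orthogonality obstruction. A full exceptional sequence needs $n$ entries drawn from the $2(n-1)$ modules $N(i,i+1), S(i,i+1)$, and in particular these $n$ modules must be pairwise ext-orthogonal (in the sense that for any two of them $X,Y$ appearing in the sequence, $\mathrm{Ext}^i_{C_n}(X,Y)=0$ for all $i>0$ in at least one order, but since exceptionality also controls $\mathrm{Hom}$ in the strict triangularity, I should be careful). The key input, already established in the proof of Theorem~\ref{thm22}, is that no two modules of type $N$ can be simultaneous summands of an ext-self-orthogonal module, and likewise no two modules of type $S$: for $s<t$ one has $\mathrm{Ext}^{t-s}_{C_n}(N(s,s+1),N(t,t+1))\neq 0$ and symmetrically for $S$. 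Therefore an exceptional sequence can contain at most one module of type $N$ and at most one module of type $S$, giving at most $2$ entries in total. Since $n>2$ (for $n=2$ the algebra $C_2$ is self-injective and the statement is handled separately, or trivially as $C_2=A_2$ has no non-projective exceptional modules beyond the two standards which again fail orthogonality), we have $2<n$, so no collection of $n$ pairwise ext-orthogonal exceptional modules exists, hence no full exceptional sequence of $C_n$-modules exists.

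I expect the main subtlety to be the precise bookkeeping around the case $n=2$ and around whether "exceptional sequence" requires the mutual vanishing only in one direction (which it does, by the definition in Subsection~\ref{s6.1}: $\mathrm{Ext}^i_{\Lambda}(M_t,M_s)=0$ for $s<t$ and $i\geq 0$). Even with one-directional vanishing, the obstruction survives: if both $N(s,s+1)$ and $N(t,t+1)$ with $s<t$ appeared in the sequence in some order, then in the order where the later module is $N(t,t+1)$ we would need $\mathrm{Ext}^{t-s}_{C_n}(N(t,t+1),N(s,s+1))=0$, and in the other order we would need $\mathrm{Ext}^{t-s}_{C_n}(N(s,s+1),N(t,t+1))=0$ — and the computation in Theorem~\ref{thm22}'s proof shows the second of these is nonzero, so the only potentially surviving order is forbidden as well once one checks the first; applying the duality $\alpha$ if needed, or recomputing directly, shows both directions can be made to fail, so in any case at most one type-$N$ and at most one type-$S$ module can occur. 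Thus the length of any exceptional sequence is at most $2<n$, contradicting fullness. The argument is short; the only real work is citing the extension computations from Theorem~\ref{thm22} and ruling out the endomorphism obstruction for the projective-injectives, both of which are already available.

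\begin{proof}
By Proposition~\ref{prop21}, every ext-self-orthogonal $C_n$-module is a direct sum of copies of projective-injective modules and the modules $N(i,i+1)$, $S(i,i+1)$ for $i=1,\dots,n-1$. In particular an indecomposable exceptional $C_n$-module is either projective-injective or one of $N(i,i+1)$, $S(i,i+1)$. The projective-injective indecomposables are $P(2),\dots,P(n-1)$, each of which admits a non-zero nilpotent endomorphism sending the top to the socle; hence none of them is exceptional. Each of $N(i,i+1)$ and $S(i,i+1)$ has trivial endomorphism algebra, so the indecomposable exceptional $C_n$-modules are exactly the modules $N(i,i+1)$, $S(i,i+1)$ for $i=1,\dots,n-1$.

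A full exceptional sequence generates the derived category, so its length equals the number of isomorphism classes of simple $C_n$-modules, namely $n$. We claim that any exceptional sequence of $C_n$-modules has at most one entry of type $N$ and at most one entry of type $S$. Suppose $N(s,s+1)$ and $N(t,t+1)$ with $s<t$ both occur in an exceptional sequence $\mathbf{M}$. From the exact sequence \eqref{eq4} with these $s,t$, which is the first part of the projective resolution of $N(s,s+1)$, together with the fact that the only projective modules admitting a non-zero homomorphism to $N(t,t+1)$ are $P(t)$ and $P(t+1)$ and that the identity endomorphism of $N(t,t+1)$ does not factor through $P(t)$, we obtain $\mathrm{Ext}^{t-s}_{C_n}(N(s,s+1),N(t,t+1))\neq0$. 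Twisting by the automorphism $\alpha$ that reverses the quiver, the analogous computation for \eqref{eq42} gives $\mathrm{Ext}^{t-s}_{C_n}(N(t,t+1),N(s,s+1))\neq0$ as well. Hence $N(s,s+1)$ and $N(t,t+1)$ cannot both occur in $\mathbf{M}$, in either order. Applying $\alpha$, the same holds for two modules of type $S$.

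Consequently any exceptional sequence of $C_n$-modules has length at most $2$. Since $n>2$, there is no full exceptional sequence of $C_n$-modules.
\end{proof}
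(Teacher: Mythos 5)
Your opening step (using Proposition~\ref{prop21} to restrict attention to the modules $N(i,i+1)$ and $S(i,i+1)$, after discarding the projective-injectives because of their nilpotent endomorphisms) agrees with the paper, but the step you rely on to bound the length of an exceptional sequence is false. You claim that for $s<t$ both $\mathrm{Ext}^{t-s}_{C_n}(N(s,s+1),N(t,t+1))\neq 0$ and $\mathrm{Ext}^{t-s}_{C_n}(N(t,t+1),N(s,s+1))\neq 0$, deducing the second from the first by twisting with $\alpha$. But $\alpha$ swaps modules of type $N$ with modules of type $S$ (as used in the proof of Proposition~\ref{prop21}), so twisting the first non-vanishing only yields $\mathrm{Ext}^{t-s}_{C_n}(S(t,t+1),S(s,s+1))\neq 0$; it says nothing about extensions from $N(t,t+1)$ to $N(s,s+1)$. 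In fact these vanish: the minimal projective resolution of $N(t,t+1)$ is $0\to P(n)\to\dots\to P(t+2)\to P(t+1)\to N(t,t+1)\to 0$ (this is \eqref{eq4} with $s$ replaced by $t$ and $t$ by $n-1$, using $N(n-1,n)=P(n)$), and for $s<t$ none of the projectives $P(t+1),\dots,P(n)$ admits a non-zero homomorphism to $N(s,s+1)$, so $\mathrm{Ext}^k_{C_n}(N(t,t+1),N(s,s+1))=0$ for all $k\geq 0$. Since the definition of an exceptional sequence only requires vanishing from later entries to earlier ones, your conclusion that at most one module of type $N$ can occur is wrong: for instance $(N(1,2),N(2,3),\dots,N(n-1,n))$ is an exceptional sequence of length $n-1$, which refutes your claimed bound of $2$.

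The obstruction your argument never uses is the one that actually makes the theorem work: for each $i$ one has $\mathrm{Hom}_{C_n}(S(i,i+1),N(i,i+1))\neq 0$ and $\mathrm{Hom}_{C_n}(N(i,i+1),S(i,i+1))\neq 0$, so $S(i,i+1)$ and $N(i,i+1)$ can never occur together in an exceptional sequence, in either order (recall the condition $\mathrm{Ext}^i_{\Lambda}(M_t,M_s)=0$ includes $i=0$). Hence an exceptional sequence contains at most one module per index $i\in\{1,\dots,n-1\}$ and so has length at most $n-1<n$, which rules out fullness; this argument also covers $n=2$, which your write-up only treats in passing. Your proof becomes correct if the false two-sided $N$-to-$N$ (and $S$-to-$S$) non-vanishing is replaced by this same-index Hom obstruction.
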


\begin{proof}
As projective-injective modules do not have trivial endomorphism algebra,
from Proposition~\ref{prop21} it follows that any exceptional sequence
of modules must consist of modules of the form $S(i,i+1)$ and $N(i,i+1)$.
As
\begin{displaymath}
\mathrm{Hom}_{C_n}(S(i,i+1),N(i,i+1))\neq 0\quad\text{ and }\quad 
\mathrm{Hom}_{C_n}(N(i,i+1),S(i,i+1))\neq 0,
\end{displaymath}
the modules $S(i,i+1)$ and $N(i,i+1)$ cannot belong to the same exceptional
sequence of $C_n$-modules. Therefore an exceptional sequence of
$C_n$-modules cannot contain more than $n-1$ modules and thus 
cannot be full.
\end{proof}

\vspace{5mm}

\noindent

Department of Mathematics, Uppsala University, Box. 480,
SE-75106, Uppsala, SWEDEN, email: {\tt elin.persson.westin\symbol{64}math.uu.se}

\end{document}